\newtheorem{theorem}{Theorem}[section]
\newtheorem{lemma}[theorem]{Lemma}
\newtheorem{proposition}[theorem]{Proposition}
\theoremstyle{definition}
\newtheorem{definition}[theorem]{Definition}
\newtheorem{remark}[theorem]{Remark}
\newtheorem{corollary}[theorem]{Corollary}
\numberwithin{equation}{section}
\begin{document}

\title[]{Multifractal analysis of the convergence exponent in continued fractions}

\author {Lulu Fang}
\address{School of Mathematics, Sun Yat-sen University, Guangzhou, GD 510275, P.R.~China}
\email{fanglulu@mail.sysu.edu.cn}

\author {Kunkun Song}
\address{School of Mathematics and Statistics, Wuhan University, Wuhan, 430072, P. R. China}
\address{lama umr 8050, cnrs, université paris-est créteil, 61 avenue du général de gaulle, 94010, créteil cedex, france}
\email{songkunkun@whu.edu.cn}

\subjclass[2010]{Primary 11K50; Secondary 28A80}
\keywords{Multifractal analysis, convergence exponent, continued fractions.}

\begin{abstract}
Let $x \in [0,1)$ be a real number and denote its continued fraction expansion by $[a_1(x),a_2(x), a_3(x),\cdots]$.
The convergence exponent of these partial quotients is defined as
\[
\tau(x):= \inf\left\{s \geq 0: \sum_{n \geq 1} a^{-s}_n(x)<\infty\right\}.
\]
In this paper, we investigate some fundamental properties and multifractal analysis of the exponent $\tau(x)$.

\end{abstract}

\maketitle

\section{Introduction}

Let $\{b_n\}_{n \geq 1}$ be a sequence of positive integers. The convergence exponent \emph{(or exponent of convergence)} of $\{b_n\}_{n \geq 1}$ is defined as
\[
\tau:= \inf\left\{s \geq 0: \sum_{n \geq 1} b^{-s}_n<\infty\right\}=\sup\left\{t \geq 0: \sum_{n \geq 1} b^{-t}_n=\infty\right\}.
\]
It is well known that if $\{b_n\}_{n \geq 1}$ is non-decreasing, then the index $\tau$ can be calculated by using the following formula:
\begin{equation}\label{cal}
\tau = \limsup_{n \to \infty} \frac{\log n}{\log b_n},
\end{equation}
see Markushevich \cite[Theorem 10.2]{lesMar}. Such an index is arisen in serval different fields and plays an important role in depicting the fractal dimension of some sets.
It has been used by Besicovitch and Taylor \cite{BesTay54} to study the Hausdorff measure and Hausdorff dimension of a compact set whose complementary components are open intervals with length $\{b^{-1}_n\}_{n \geq 1}$;
by Dodson \cite{Dodson92} to characterize the Hausdorff dimension of certain sets associated with Diophantine approximations;
by Fan et al.\,\cite{FLMW10} to determine the Hausdorff dimension of Besicovitch-Eggleston sets in countable symbolic space;
by Hawkes \cite{lesHaw} to find the entropy dimension of some random sets determined by L\'{e}vy processes;
by Li et al.\,\cite{LSX13} to describe the Hausdorff dimension and hitting probabilities of random covering sets;
by Wang and Wu \cite{lesWW08A} to prove Hirst's conjecture on the Hausdorff dimension of certain sets of real numbers whose partial quotients tend to infinity and also lie in a fixed strictly increasing infinite sequence of integers.

In this paper, we will investigate the convergence exponent of partial quotients in continued fractions. Recall that each real number $x \in [0,1)$ admits a continued fraction expansion of the form
\[
x = \dfrac{1}{a_1(x) +\dfrac{1}{a_2(x) +\dfrac{1}{a_3(x)+ \ddots}}}:=[a_1(x), a_2(x), a_3(x), \cdots],
\]
where $a_1(x), a_2(x), a_3(x), \cdots$ are positive integer, called the \emph{partial quotients} of $x$.
Such an $x$ is irrational if and only if it has a unique and infinite continued fraction expansion,
see Iosifescu and Kraaikamp \cite{lesIK02} for more arithmetic and metric results of continued fractions.
Let $\tau(x)$ be the convergence exponent of the sequence of partial quotients of $x$. It is easy to see that the exponent $\tau$ takes values in $[0,\infty]$ and that $\tau(x) =0$ for any rational number $x \in [0,1)$. We are concerned with the fundamental properties (e.g., measurability, intermediate value property, continuity, etc) and multifractal analysis of the convergence exponent $\tau$.
This type of problem have been studied for many and various exponents related to continued fractions, for instance, see Jarn\'{\i}k \cite{Jar28} for the Diophantine approximation exponent,
Pollicott and Weiss \cite{lesPW99} for the Lyapunov exponent of Gauss map, Kesseb\"{o}hmer and Stratmann \cite{lesKS} for the Minkowski's question mark function, Fan et al.\,\cite{FLWW09} for the Khintchine exponent,  Nicolay and Simons \cite{lesNS} for the Cantor bijection, Jaffard and Martin \cite{lesJM} for the Brjuno function and so on.

For the real number $x\in [0,1)$ whose partial quotients are non-decreasing, by (\ref{cal}), we know that its convergence exponent $\tau(x)$ is described by the slow growth rate of partial quotients of $x$.
In this case, the multifractal analysis of $\tau$ is related to the set of real numbers for which their partial quotients slowly tends to infinity.
Actually, the fractal dimension of certain sets arising in continued fractions with some
restrictions on the growth rate of partial quotients has a long history, see Jarn\'{\i}k \cite{Jar28}, Good \cite{lesGood41}, Hirst \cite{lesHir73}, Cusick \cite{Cus90},
{\L}uczak \cite{lesLuc97}, Wang and Wu \cite{lesWW08A, lesWW08}, Fan et al.\,\cite{FLWW09, FLWW13},
Tong and Wang \cite{TW}, Jordan and Rams \cite{lesJR12}, Liao and Rams \cite{LR16},
just to mention a few.

The paper is organized as follows.
Section 2 is devoted to presenting our main results and their explanations. In Section 3, some basic results of continued fractions and useful lemmas for calculating the Hausdorff dimension of a fractal set are listed.
The proofs of our main results are given in Section 4.
Throughout this paper, we use $|\cdot|$ to denote the length of a subset of $[0,1)$, $\lfloor x\rfloor$ the largest integer smaller than $x$, $cl$ the closure of a set,  $\#$ the cardinality of a finite set, $\dim_{\rm H}$ the Hausdorff dimension and $\mathcal{H}^{s}$ the $s$-dimensional Hausdorff measure, respectively.

\section{Main results}
We first give some fundamental properties of the exponent $\tau:[0,1) \to [0,\infty]$.

\begin{theorem}\label{Fu}
The following results hold.\\
(i) The function $\tau: [0,1) \to [0,\infty]$ is Borel measurable.\\
(ii) For Lebesgue almost all $x \in [0,1)$, we have $\tau(x) =\infty$.\\
(iii) For any $0 \leq \alpha\leq \infty$, there exists an irrational number $x \in [0,1)$ such that $\tau(x) =\alpha$.\\
(iv) For any $0 \leq \alpha\leq \infty$, the level set $\{x \in [0,1): \tau(x) =\alpha\}$ is uncountable and dense in $[0,1)$.\\
(v) For any interval $I \subseteq [0,1)$, we have $\tau(I) = [0,\infty]$.\\
(vi) The function $\tau: [0,1)\to [0,\infty]$ is everywhere discontinuous.\\
\end{theorem}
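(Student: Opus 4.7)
The plan is to treat (i) and (ii) by standard measure-theoretic or ergodic arguments and to build (iii)--(vi) on top of a single explicit construction producing irrational points with any prescribed convergence exponent. For (i), each partial-quotient function $a_n:[0,1)\to \mathbf{N}\cup\{\infty\}$ is Borel (its level sets are unions of rank-$n$ cylinders), so the partial sums $x\mapsto\sum_{n=1}^N a_n(x)^{-s}$ and their pointwise limit $x\mapsto\sum_{n\geq 1} a_n(x)^{-s}$ are Borel for every positive rational $s$; hence $A_s:=\{x:\sum_n a_n(x)^{-s}<\infty\}$ is Borel and $\{\tau\leq\alpha\}=\bigcap_{s>\alpha,\,s\in\mathbf{Q}}A_s$ is Borel. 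For (ii), I would invoke the ergodicity of the Gauss map with respect to the Gauss measure (equivalent to Lebesgue): Birkhoff's theorem gives, for each fixed $k\geq 1$, a positive asymptotic frequency of indices $n$ with $a_n(x)=k$ at almost every $x$. In particular, for a.e.\ $x$ the digit $1$ occurs infinitely often among the $a_n(x)$, so $\sum_n a_n(x)^{-s}=+\infty$ for every $s\geq 0$, whence $\tau(x)=\infty$.

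For (iii), I would exhibit explicit non-decreasing sequences of partial quotients and invoke the formula (\ref{cal}). The choice $a_n=2^{2^n}$ gives $\tau=0$; $a_n\equiv 1$ (so $x=(\sqrt{5}-1)/2$) gives $\tau=\infty$; and for $\alpha\in(0,\infty)$ the choice $a_n=\lfloor n^{1/\alpha}\rfloor+1$ yields $\log n/\log a_n\to\alpha$, hence $\tau=\alpha$. Each expansion is infinite, so the resulting $x$ is irrational. Parts (iv) and (v) then reduce to the fact that $\tau$ is tail-determined: if $y=[c_1,c_2,\ldots]$ has $\tau(y)=\alpha$, then for any finite word $(b_1,\ldots,b_N)$ of positive integers the point $x=[b_1,\ldots,b_N,c_1,c_2,\ldots]$ also has $\tau(x)=\alpha$, since the two series differ by only finitely many terms. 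Given an interval $I\subseteq[0,1)$ and a target $\alpha\in[0,\infty]$, one selects the prefix so that the associated rank-$N$ cylinder (whose diameter shrinks to $0$ with $N$) lies inside $I$; this places such an $x$ inside $I$, yielding both density in (iv) and $\tau(I)=[0,\infty]$ in (v). For the uncountability in (iv) I would vary infinitely many digits instead of the prefix: for $\alpha\in(0,\infty)$ set $a_n^{\varepsilon}=\lfloor n^{1/\alpha}\rfloor+2+\varepsilon_n$ with $\varepsilon=(\varepsilon_n)_{n\geq 1}\in\{0,1\}^{\mathbf{N}}$; a two-sided comparison of the resulting series with $\sum_n n^{-s/\alpha}$ shows every such $\varepsilon$ produces $\tau=\alpha$, and distinct $\varepsilon$'s give distinct $x$'s, producing continuum many. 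The endpoints $\alpha=0$ and $\alpha=\infty$ are handled by the analogous perturbations $2^{2^n}+\varepsilon_n$ and $1+\varepsilon_n$.

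Finally, (vi) is an immediate consequence of (v): if $\tau$ were continuous at some $x_0\in[0,1)$, then on a small neighborhood $I$ of $x_0$ the values of $\tau$ would cluster near $\tau(x_0)$, contradicting $\tau(I)=[0,\infty]$. The main obstacle, to the extent there is one, is organizational: one must verify that the tail argument for (iv)--(v) genuinely preserves $\tau$ under arbitrary prefix changes, and that the two-sided series comparison in the uncountability construction is robust enough that neither potential non-monotonicity of $a_n^{\varepsilon}$ nor the $\varepsilon_n$-offset disturbs the value of $\tau$. Both are routine once framed correctly, so I would expect no serious difficulty.
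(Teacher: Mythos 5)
Your proposal is correct and follows essentially the same route as the paper: Borel measurability via the digit/cylinder structure, ergodicity of the Gauss map for (ii), explicit non-decreasing sequences together with the formula \eqref{cal} for (iii), tail-invariance of $\tau$ plus $\{0,1\}$-perturbations of the digits for the uncountability and density in (iv)--(v), and deducing (vi) from the density of the level sets. The only noteworthy variation is in (ii), where you use the a.e.\ positive frequency of the digit $1$ (so that $\sum_{n} a_n(x)^{-s}$ diverges for every $s\geq 0$) instead of the paper's Birkhoff average of $a_1^{-t}$ with an estimate of $\int_0^1 a_1^{-t}\,d\mu$; this is a mild simplification of the same ergodic argument.
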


In what follows, we turn to the multifractal analysis of the exponent $\tau$, namely the Hausdorff dimension of the level sets $\{x \in [0,1): \tau(x) =\alpha\}$ for all $\alpha \geq 0$.

\begin{theorem}\label{No}
For any $\alpha\geq0$, we have
\begin{equation*}
\dim_{\rm H} \big\{x \in [0,1): \tau(x) =\alpha\big\}=
\begin{cases}
1/2, &\ \ \ \ 0\leq \alpha<\infty;\cr
1, & \ \ \ \ \ \ \alpha=\infty.
\end{cases}
\end{equation*}
\end{theorem}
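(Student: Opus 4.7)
The case $\alpha = \infty$ follows immediately from Theorem~\ref{Fu}(ii): since $\tau = \infty$ on a set of full Lebesgue measure, the corresponding level set has Hausdorff dimension $1$.

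For $\alpha \in [0,\infty)$, the \emph{upper bound} $\dim_{\rm H}\{x : \tau(x) = \alpha\} \leq 1/2$ is elementary. Indeed, if $\tau(x) < \infty$, then for any $s > \tau(x)$ the convergence of $\sum_n a_n(x)^{-s}$ forces $a_n(x) \to \infty$. Hence $\{x : \tau(x) = \alpha\} \subseteq \{x : a_n(x) \to \infty\}$, and by the classical result of Good~\cite{lesGood41} the latter set has Hausdorff dimension $1/2$.

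For the \emph{lower bound}, the plan is to exhibit, for each $\alpha \in [0,\infty)$, a Cantor-like subset $F_\alpha \subseteq \{x : \tau(x) = \alpha\}$ of Hausdorff dimension $1/2$. Choose an increasing function $\phi:\mathbb{N} \to \mathbb{N}$ with $\phi(n) \to \infty$, $\log \phi(n) = o(n)$, and $\limsup_n \log n / \log \phi(n) = \alpha$; concretely, take $\phi(n) = \lfloor n^{1/\alpha}\rfloor$ when $\alpha > 0$ and $\phi(n) = \lfloor e^{\sqrt{n}}\rfloor$ when $\alpha = 0$. Set
\[
F_\alpha := \bigl\{ x \in [0,1) : \phi(n) \leq a_n(x) < 2\phi(n) \text{ for every } n \geq 1\bigr\}.
\]
Every $x \in F_\alpha$ has non-decreasing partial quotients with $\log a_n(x) \sim \log \phi(n)$, so \eqref{cal} yields $\tau(x) = \alpha$, and hence $F_\alpha \subseteq \{x : \tau(x) = \alpha\}$.

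The main technical step is to show $\dim_{\rm H} F_\alpha \geq 1/2$. I would apply the mass distribution principle with the Bernoulli-type probability measure $\mu$ that assigns each admissible cylinder $I_n(a_1,\ldots,a_n)$ of rank $n$ the weight $\mu(I_n) = \prod_{i=1}^{n}\phi(i)^{-1}$. Combining the standard estimate $|I_n| \asymp q_n^{-2}$ with $q_n \asymp \prod_i a_i \asymp \prod_i \phi(i)$ (valid because $a_i \geq \phi(i) \to \infty$) gives $\mu(I_n) \asymp |I_n|^{1/2}$ at the cylinder level. The real obstacle is to upgrade this from cylinders to arbitrary balls: given $x \in F_\alpha$ and small $r > 0$, one picks the integer $n$ with $|I_{n+1}(x)| \leq r < |I_n(x)|$, bounds the number of rank-$(n+1)$ cylinders meeting $B(x,r)$ by $O(1 + r/|I_{n+1}(x)|)$, and uses $\log \phi(n) = o(n)$ to absorb the resulting sub-exponential error factors. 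This yields $\mu(B(x,r)) \leq C_s r^s$ for every $s < 1/2$ and all sufficiently small $r$, and the mass distribution principle then gives $\dim_{\rm H} F_\alpha \geq 1/2$.
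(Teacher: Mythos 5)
Your proposal is correct and, in structure, matches the paper: the $\alpha=\infty$ case via Theorem~\ref{Fu}(ii), the upper bound $1/2$ via $\{\tau=\alpha\}\subseteq\{a_n(x)\to\infty\}$ and Good's theorem, and the lower bound via the Cantor-type sets $\{\phi(n)\le a_n(x)<2\phi(n),\ \forall n\}$ with $\phi(n)=\lfloor n^{1/\alpha}\rfloor$ (the paper uses $e^n$ rather than your $e^{\sqrt n}$ for $\alpha=0$, which is immaterial). The only genuine divergence is in how the dimension of these Cantor sets is obtained: the paper simply invokes Lemma~\ref{flww} (Fan--Liao--Wang--Wu), whose formula $\liminf_n \log(s_1\cdots s_n)/(2\log(s_1\cdots s_n)+\log s_{n+1})$ immediately gives $1/2$ for these choices of $s_n$, whereas you re-derive the lower bound by hand with a Bernoulli measure and the mass distribution principle. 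Your route is self-contained and makes the mechanism (cylinder measure $\asymp$ length$^{1/2}$ plus sub-exponential error absorption) explicit, at the cost of redoing the ball-versus-cylinder bookkeeping that the quoted lemma already packages; both work. Two small inaccuracies to fix: points of $F_\alpha$ need \emph{not} have non-decreasing partial quotients (e.g.\ $a_n(x)=2\phi(n)-1$ followed by $a_{n+1}(x)=\phi(n+1)<2\phi(n)-1$), so you cannot apply \eqref{cal} to $a_n(x)$ directly; instead note that $\phi(n)\le a_n(x)<2\phi(n)$ makes $\sum_n a_n(x)^{-s}$ and $\sum_n\phi(n)^{-s}$ converge or diverge together, and apply \eqref{cal} to the non-decreasing sequence $\phi(n)$ to get $\tau(x)=\alpha$. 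Likewise the comparison $q_n\asymp\prod_i a_i$ holds only up to a factor $2^n$, which is harmless here precisely because $\sum_{i\le n}\log\phi(i)\gg n$ (a consequence of $\phi(n)\to\infty$, not of $\log\phi(n)=o(n)$); stating it this way closes the estimate cleanly.
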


As a consequence of Theorem \ref{No} and Good \cite[Theorem 1]{lesGood41}, we have
\begin{corollary}\label{NoCor}
For any $\alpha \in [0,\infty)$, we have
\begin{equation*}
\dim_{\rm H} \big\{x \in [0,1): \tau(x) \leq \alpha\big\}=1/2.
\end{equation*}
\end{corollary}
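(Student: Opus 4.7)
The plan is a short sandwiching argument combining Theorem \ref{No} with Good's classical result that
\[
\dim_{\rm H}\bigl\{x \in [0,1): a_n(x) \to \infty\bigr\} = 1/2,
\]
recorded in \cite[Theorem 1]{lesGood41}.

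For the lower bound, I would simply observe the trivial inclusion
\[
\{x \in [0,1): \tau(x) = \alpha\} \subseteq \{x \in [0,1): \tau(x) \leq \alpha\},
\]
so by the monotonicity of Hausdorff dimension and Theorem \ref{No}, the right-hand set has dimension at least $1/2$.

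For the upper bound, the key observation is that $\tau(x) \leq \alpha < \infty$ forces $a_n(x) \to \infty$. Indeed, fix any $s > \alpha$; then by definition of $\tau$, the series $\sum_{n \geq 1} a_n(x)^{-s}$ converges, so its general term tends to zero, that is, $a_n(x)^{-s} \to 0$. Since the $a_n(x)$ are positive integers and $s > 0$, this forces $a_n(x) \to \infty$. Therefore
\[
\{x \in [0,1): \tau(x) \leq \alpha\} \subseteq \{x \in [0,1): a_n(x) \to \infty\},
\]
and Good's theorem yields $\dim_{\rm H}\{x: \tau(x) \leq \alpha\} \leq 1/2$.

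There is no real obstacle here; the only point that requires a half-line of care is the elementary implication ``$\tau(x)$ finite $\Rightarrow$ $a_n(x) \to \infty$'', which holds precisely because the partial quotients are integers (so the decay of $a_n(x)^{-s}$ to $0$ can only happen through $a_n(x)$ diverging rather than oscillating). Combining the two bounds gives the claimed value $1/2$ for every $\alpha \in [0,\infty)$.
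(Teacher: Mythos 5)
Your proposal is correct and follows essentially the same route the paper intends: the lower bound comes from the level set $\{x: \tau(x)=\alpha\}$ of Theorem \ref{No}, and the upper bound from the observation (already used in the paper's proof of Theorem \ref{No}) that finite $\tau(x)$ forces $a_n(x)\to\infty$, combined with Good's theorem that this set has Hausdorff dimension $1/2$. Nothing further is needed.
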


\indent Let $\Lambda$ be the set of all irrationals $x \in [0,1)$ such that $\{a_n(x)\}_{n \geq 1}$ is non-decreasing and $a_n(x) \to \infty$ as $n \to \infty$.
The set $\Lambda$ is a Lebesgue null set but has Hausdorff dimension $1/2$, see Jordan and Rams \cite{lesJR12}.
We are interested in the multifractal analysis of the exponent $\tau$ over the fractal set $\Lambda$ and its spectrum function, namely the function of the Hausdorff dimension of $\{x \in \Lambda: \tau(x) =\alpha\}$ with respect to $\alpha$.

\begin{theorem}\label{tau}
For any $\alpha\geq0$, we have
\begin{equation*}
\dim_{\rm H} \big\{x \in \Lambda: \tau(x) =\alpha\big\}=
\begin{cases}
(1-\alpha)/2, &\ \ \ \ 0\leq \alpha\leq1;\cr
0, & \ \ \ \ \ \ \alpha>1.
\end{cases}
\end{equation*}
\end{theorem}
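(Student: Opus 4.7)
The plan is to exploit the fact that on $\Lambda$ the partial quotients are non-decreasing and tend to infinity, so formula (1.1) applies pointwise and
\[
\tau(x) = \limsup_{n \to \infty} \frac{\log n}{\log a_n(x)} \quad \text{for every } x \in \Lambda.
\]
Thus the level set $\{x \in \Lambda: \tau(x) = \alpha\}$ translates into explicit size constraints on $a_n(x)$: for every $\varepsilon>0$, we need $a_n(x) \geq n^{1/(\alpha+\varepsilon)}$ eventually, and (when $\alpha>0$) $a_n(x) \leq n^{1/(\alpha-\varepsilon)}$ infinitely often. My strategy is to prove matching upper and lower bounds by reducing the level set to a union/intersection of auxiliary sets defined by one-sided growth conditions on non-decreasing quotients, and then carry out a standard cylinder-counting argument together with an explicit Cantor-type construction.

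For the upper bound in the regime $0\le\alpha\le 1$, I would first enlarge to
\[
F_\beta := \bigl\{x \in \Lambda: a_n(x) \ge n^{1/\beta} \text{ for all large } n\bigr\}, \qquad \beta = \alpha + \varepsilon,
\]
and cover $F_\beta$ by level-$n$ fundamental cylinders. Using the classical bound $|I_n(a_1,\ldots,a_n)| \le \prod_{i=1}^n a_i^{-2}$, together with the lower constraint $a_i \ge i^{1/\beta}$ and the non-decreasing condition from $\Lambda$, the number of admissible words is controlled by a binomial-type estimate while each cylinder has length at most $(n!)^{-2/\beta}$. A direct computation of the resulting series for the $s$-dimensional Hausdorff measure then shows it vanishes for any $s>(1-\beta)/2$, giving $\dim_{\rm H} F_\beta \le (1-\beta)/2$ and, upon letting $\varepsilon\to 0$, the desired bound. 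For $\alpha>1$, the inclusion $\{\tau=\alpha\}\subseteq\{a_n(x)\le n^{1/(\alpha-\varepsilon)} \text{ i.o.}\}\cap\Lambda$ forces $a_n(x)$ to grow strictly sub-linearly along a subsequence while remaining non-decreasing; combining this with known zero-dimension results for slow-growth sets in $\Lambda$ (in the spirit of Jordan–Rams) yields Hausdorff dimension zero.

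For the lower bound in the case $0\le\alpha\le 1$, I would construct a Cantor-type subset $E_\alpha \subseteq \Lambda$ by prescribing, at stage $n$, that $a_n(x)$ lies in a window of the form $[\lfloor n^{1/\alpha}\rfloor, 2\lfloor n^{1/\alpha}\rfloor]$ (replaced by an appropriate super-polynomial window when $\alpha=0$) and that the sequence remains non-decreasing; the window size will be tuned to produce the target branching. Every $x\in E_\alpha$ then satisfies $\tau(x)=\alpha$ by construction, and a mass distribution argument on the natural Bernoulli-type measure supported on $E_\alpha$, comparing cylinder measures to cylinder diameters, will yield $\dim_{\rm H} E_\alpha \ge (1-\alpha)/2$. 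The endpoint $\alpha=1$ requires only a zero lower bound, so it is automatic.

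The main obstacle is the non-decreasing monotonicity constraint built into $\Lambda$: it couples consecutive partial quotients and breaks the usual product-measure heuristics used for unconstrained continued fractions. This affects both the upper-bound counting (one must count non-decreasing admissible strings rather than arbitrary ones, roughly replacing $M^n$ by $\binom{M+n-1}{n}$) and the lower-bound construction (each branching choice at stage $n$ must respect the previously chosen $a_{n-1}$). Calibrating the prescribed growth of $a_n$ precisely so that the level $\tau(x)=\alpha$ is achieved \emph{and} the Hausdorff dimension is exactly $(1-\alpha)/2$ is the technical heart of the argument; this is where the factor $1/2$, inherited from $\dim_{\rm H}\Lambda=1/2$, and the linear correction $-\alpha/2$ coming from the stronger growth demand must be reconciled.
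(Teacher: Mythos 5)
Your reduction via formula (1.1) to growth-rate level sets of $\log a_n(x)/\log n$ on $\Lambda$ is exactly the paper's route (Theorem \ref{tau} is deduced from Theorem \ref{cfx}), but the execution has a genuine gap in the upper bound for $0<\alpha\le 1$. You enlarge the level set to $F_\beta=\{x\in\Lambda:\ a_n(x)\ge n^{1/\beta}\ \text{for all large }n\}$ and claim a covering argument gives $\dim_{\rm H}F_\beta\le(1-\beta)/2$. This is false: $F_\beta$ has Hausdorff dimension $1/2$. Indeed, for any large $\gamma$ the set $\{x:\ nM\lfloor n^{\gamma-1}\rfloor\le a_n(x)<(n+1)M\lfloor n^{\gamma-1}\rfloor\ \forall n\}$ is contained in $\Lambda$, satisfies $a_n(x)\ge n^{1/\beta}$ eventually, and has dimension $(\gamma-1)/(2\gamma)$ by Lemma \ref{geshu}; letting $\gamma\to\infty$ gives $\dim_{\rm H}F_\beta=1/2$. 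Correspondingly, your counting step cannot be carried out: with only a lower constraint plus monotonicity, the number of admissible level-$n$ words is infinite, so there is no ``binomial-type'' bound. The missing idea — which is the heart of the paper's upper bound for Theorem \ref{cfx} — is to keep the complementary constraint coming from the $\limsup$/$\liminf$ being attained along a subsequence, and to cover only at those good times $k$ where $a_k(x)\le k^{1/\alpha+\varepsilon}$: monotonicity then caps \emph{all} $a_j$, $j\le k$, by the same threshold, so the admissible words at time $k$ are counted by the finite binomial quantity of Lemma \ref{card}, while the eventual lower bound $a_j\ge j^{1/\alpha-\varepsilon}$ gives $|I(\sigma_1,\dots,\sigma_k)|\le (k!)^{-2(1/\alpha-\varepsilon)}$; Stirling then produces the exponent $(1-\alpha)/2$. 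Discarding the upper constraint, as you do, destroys exactly the feature that makes the dimension drop below $1/2$.

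The other two pieces also need repair, though they are closer in spirit to the paper. For the lower bound, the windows $[\lfloor n^{1/\alpha}\rfloor,2\lfloor n^{1/\alpha}\rfloor]$ are not compatible with monotonicity by themselves (consecutive windows overlap, so an admissible choice at stage $n-1$ can exceed admissible choices at stage $n$), and once you impose monotonicity the branching becomes non-uniform — it can collapse from $\asymp n^{1/\alpha}$ to $\asymp n^{1/\alpha-1}$ depending on where $a_{n-1}$ sits — which is precisely the calibration you leave unresolved. The paper's Lemma \ref{geshu} settles this cleanly: windows $[ns_n,(n+1)s_n)$ with $s_n=M\lfloor n^{1/\alpha-1}\rfloor$ make monotonicity automatic, give uniform branching $s_n-1$, and Falconer's nested-interval lemma (Lemma \ref{cxj}) yields exactly $(1-\alpha)/2$; note the correct window width is $\asymp n^{1/\alpha-1}$, not $\asymp n^{1/\alpha}$. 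For $\alpha>1$, the ``known zero-dimension results in the spirit of Jordan--Rams'' you invoke are not a quotable statement; the required fact (that $\liminf\log a_n(x)/\log n<1$ on $\Lambda$ forces dimension $0$) is proved in the paper by the same cover-at-good-times device: at times $k$ with $a_k(x)\le k^{c}$, $c<1$, monotonicity leaves at most $e^{(1+\log k)k^{c}}$ admissible words — subexponentially many — while every cylinder of order $k$ has length at most a constant times $((1+\sqrt5)/2)^{-2k}$, so $\mathcal{H}^{s}$ vanishes for every $s>0$.
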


\begin{remark}
(i) We conclude from Theorem \ref{tau} that for any $\alpha\in[0,\infty)$,
\[
\dim_{\rm H} \{x \in \Lambda: \tau(x) \leq \alpha\} = 1/2.
\]
Indeed, the upper bound is trivial since the set $\Lambda$ is of Hausdorff dimension $1/2$. For the lower bound, it follows from Theorem \ref{tau} that $\dim_{\rm H} \{x \in \Lambda: \tau(x) =0\}=1/2$, which yields
$\dim_{\rm H} \{x \in \Lambda: \tau(x) \leq \alpha\} \geq 1/2$  for all $\alpha \geq 0$.\\
(ii) Theorem \ref{No} shows that there is no any multifractal phenomenon for the exponent $\tau$ in this case, i.e., the Hausdorff dimension of the set $\{x \in [0,1): \tau(x) =\alpha\}$ does not vary with the level $\alpha$. However, Theorem \ref{tau} shows that $\tau$ displays a multifractal phenomenon when it is restricted in $\Lambda$.\\
(iii)  Analysis similar to that in the proof of Theorem \ref{No} gives that
\[
\dim_{\rm H} \big\{x \in [0,1): \tau(x) =\alpha,~a_n(x) \to \infty\big\}=1/2.
\]
This means that the monotonicity of sequences $\{a_n(x)\}_{n \geq 1}$ in $\Lambda$ plays an essential role in the result of Theorem \ref{tau}.\\
(iv) It is an interesting example exhibiting that the Hausdorff dimension of the intersection of two fractal sets with Hausdorff dimension $1/2$ can be any value between $0$ and $1/2$.

\end{remark}

To prove Theorem \ref{tau}, by the formula (\ref{cal}), it is equivalent to consider the Hausdorff dimension of the set
\begin{equation}\label{E}
E(\alpha):=\left\{x\in\Lambda: \liminf\limits_{n\to\infty}\frac{\log a_n(x)}{\log n}=\alpha\right\}~(\alpha \geq 0).
\end{equation}

\begin{theorem}\label{cfx}
For any $\alpha\geq0$, we have
\begin{equation*}
\dim_{\rm H}E(\alpha)=
\begin{cases}
0, &\ \ \ \ 0\leq \alpha<1;\cr
(\alpha-1)/(2\alpha), & \ \ \ \ \ \ \alpha\geq 1.
\end{cases}
\end{equation*}
\end{theorem}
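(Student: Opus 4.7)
The plan is to prove two matching bounds on $\dim_{\rm H} E(\alpha)$: the upper bound $\dim_{\rm H} E(\alpha)\le (\alpha-1)/(2\alpha)$ (interpreted as $0$ when $\alpha\le 1$) and the corresponding lower bound. The upper half uses an explicit cylinder cover coming from the $\liminf$ definition; the lower half constructs a Cantor-like subset of $E(\alpha)$ and applies the mass distribution principle.

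For the upper bound, fix small $\epsilon>0$. The assumption $\liminf_n \log a_n(x)/\log n=\alpha$ gives, for each $x\in E(\alpha)$, that (i) $a_n(x)\ge n^{\alpha-\epsilon}$ for all $n\ge n_0(x)$ and (ii) $a_n(x)\le n^{\alpha+\epsilon}$ for infinitely many $n$. Writing $E(\alpha)=\bigcup_{n_0\ge 1} E^{(\epsilon)}_{n_0}$, where $E^{(\epsilon)}_{n_0}$ enforces (i) from index $n_0$ onward, countable stability of Hausdorff dimension reduces matters to a single $n_0$. Combined with the non-decreasing structure inherited from $\Lambda$, property (ii) yields, for every $N$,
\[
E^{(\epsilon)}_{n_0}\subseteq\bigcup_{n\ge N}\bigcup_{(a_1,\ldots,a_n)\in G_n} I_n(a_1,\ldots,a_n),
\]
where $G_n$ is the set of non-decreasing integer tuples with $i^{\alpha-\epsilon}\le a_i\le n^{\alpha+\epsilon}$ for $n_0\le i\le n$. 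Using $|I_n|\le q_n^{-2}$ together with $q_n\ge \prod_{i=1}^n a_i\ge c(n_0)(n!)^{\alpha-\epsilon}$, and bounding $|G_n|\le \binom{\lfloor n^{\alpha+\epsilon}\rfloor+n-1}{n}$, Stirling's formula yields
\[
\sum_{(a_i)\in G_n}|I_n(a_1,\ldots,a_n)|^s \le C^n\cdot n^{n[(\alpha+\epsilon)-1-2s(\alpha-\epsilon)]}.
\]
For $s>(\alpha-1)/(2\alpha)$, and for any $s>0$ when $\alpha<1$, choosing $\epsilon$ small enough renders the exponent of $n$ strictly negative, so the $s$-Hausdorff sum vanishes as $N\to\infty$, giving the desired upper bound.

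For the lower bound, the case $\alpha\le 1$ is immediate once one exhibits a single point of $E(\alpha)$, e.g.\ by taking $a_n(x)$ obtained from $\lfloor n^\alpha\rfloor$ adjusted to be non-decreasing and tending to infinity. For $\alpha>1$ I would set $b_n=\lfloor n^\alpha\rfloor$ and
\[
F_\alpha=\{x\in [0,1): b_n\le a_n(x)<b_{n+1} \text{ for every } n\ge 1\}.
\]
Since $a_n(x)<b_{n+1}\le a_{n+1}(x)$, the partial quotients are automatically non-decreasing, tend to infinity, and satisfy $\log a_n(x)/\log n\to\alpha$, so $F_\alpha\subseteq E(\alpha)$. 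Equip $F_\alpha$ with the uniform probability measure $\mu$ assigning mass $1/N_n$ to each of the $N_n:=b_{n+1}-b_n\asymp \alpha n^{\alpha-1}$ admissible digits at step $n$. Then for any cylinder $I_n(a_1,\ldots,a_n)$ meeting $F_\alpha$,
\[
\mu(I_n)=\prod_{i=1}^n N_i^{-1}\asymp \alpha^{-n}(n!)^{-(\alpha-1)},\qquad |I_n|\asymp q_n^{-2}\asymp (n!)^{-2\alpha},
\]
so $\mu(I_n)/|I_n|^s$ remains bounded for every $s\le (\alpha-1)/(2\alpha)$. A standard comparison between balls and cylinders of matching scale, using the spacing of fundamental intervals of continued fractions, promotes this to $\mu(B(x,r))\le C r^s$ for every $s<(\alpha-1)/(2\alpha)$, and the mass distribution principle gives $\dim_{\rm H} F_\alpha\ge (\alpha-1)/(2\alpha)$.

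The main obstacle is the combinatorial balancing in the upper bound: both $|G_n|$ and the worst-case cylinder measure are superexponential in $n$, and their product is exactly critical at $(\alpha-1)/(2\alpha)$, so $\epsilon$ must be chosen as a function of $s-(\alpha-1)/(2\alpha)$ to beat the critical exponent. A secondary technical point is the ball-to-cylinder passage in the lower bound: although classical in the continued-fraction setting, it requires identifying, for each radius $r$, the level $n$ at which $x$ lies in a cylinder of diameter comparable to $r$ and controlling how many such cylinders a ball of radius $r$ can meet.
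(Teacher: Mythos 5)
Your proposal follows essentially the same route as the paper: the upper bound uses the same cover by cylinders indexed by non-decreasing tuples with $i^{\alpha-\epsilon}\le a_i\le n^{\alpha+\epsilon}$, counted by the same binomial coefficient (the paper's Lemma 4.1) and Stirling estimate with $|I_n|\le \big(\prod a_i\big)^{-2}$, while your set $F_\alpha$ with $\lfloor n^\alpha\rfloor\le a_n<\lfloor (n+1)^\alpha\rfloor$ is the paper's Cantor set $\mathbb{F}(\alpha)$ (digits of size about $n^\alpha$ with about $n^{\alpha-1}$ choices at step $n$), whose dimension the paper obtains from its nested-construction Lemma 3.4/3.5 (Falconer's Example 4.6), which encapsulates exactly the ball-to-cylinder mass-distribution step you defer. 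The only divergences are cosmetic: the paper handles $0\le\alpha<1$ separately via the Fibonacci bound $q_n\ge c\big(\tfrac{1+\sqrt5}{2}\big)^n$ (your unified exponent bound is not valid as displayed when $\alpha+\epsilon<1$, though the conclusion there is immediate), and it fixes $s=(\alpha+2\varepsilon-1)/(2(\alpha-\varepsilon))$ rather than choosing $\epsilon$ after $s$.
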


\begin{remark}\label{xd}
For any $\alpha \geq 0$, let
\[
F(\alpha):=\left\{x\in[0,1): \liminf\limits_{n\to\infty}\frac{\log a_n(x)}{\log n}=\alpha\right\}.
\]
Then we claim that
\begin{equation}\label{fa}
\dim_{\rm H}F(\alpha)=
\begin{cases}
1\ \ \ \ \ \text{if}\ \alpha=0;\cr
\frac{1}{2}\ \ \ \ \ \text{if}\ \alpha>0.
\end{cases}
\end{equation}


In fact, on one hand, on account of Good \cite[Theorem 1]{lesGood41} and Fan et al.\,\cite[Lemma 3.2]{FLWW09}, we deduce that $F(\alpha)$ has Hausdorff dimension $1/2$ for any $\alpha >0$. On the other hand, since the set of real numbers whose partial quotients are bounded has full Hausdorff dimension (see \cite{Jar28}), we also have $\dim_{\rm H}F(0)=1$.\\
\indent Notice that $E(\alpha)=\Lambda\cap F(\alpha)$. It is natural to try to relate the Hausdorff dimension of this intersection to that of original sets. In general, we expect the Hausdorff dimension of the intersection of two sets to behave well. That is to say, let $E$ and $F$ be  two subsets of $[0,1)$, then
\[
\dim_{\rm H}(E\cap F)=\dim_{\rm H}E+\dim_{\rm H}F-1 \ \ \text{or}\ \
\dim_{\rm H}(E\cap F)=\min\{\dim_{\rm H}E,\ \dim_{\rm H}F\}.
\]
 However, we will show that the behaviour of $E(\alpha)$ does not follow  the above cases. Indeed, it follows from \eqref{fa}, Theorem \ref{cfx} and $\dim_{\rm H} \Lambda =1/2$ that
\begin{equation*}
\dim_{\rm H}E(\alpha) =
\begin{cases}
\dim_{\rm H}(\Lambda\cap F(\alpha))<\dim_{\rm H}\Lambda+\dim_{\rm H}F(\alpha)-1,\ \ \ \text{if}\ \ \ \alpha=0,\cr
\dim_{\rm H}(\Lambda\cap F(\alpha))=\dim_{\rm H}\Lambda+\dim_{\rm H}F(\alpha)-1,\ \ \ \text{if}\ \ \  0<\alpha\leq1;\cr
\dim_{\rm H}(\Lambda\cap F(\alpha))>\dim_{\rm H}\Lambda+\dim_{\rm H}F(\alpha)-1,\ \ \ \text{if}\ \ \  \alpha>1.
\end{cases}
\end{equation*}
Moreover, for any $\alpha\geq0$, we always have
\[\dim_{\rm H}E(\alpha)<\min\{\dim_{\rm H}\Lambda,\ \dim_{\rm H}F(\alpha)\}.\]
There are more different phenomena on the intersection of fractal sets, for instance see Amou and Bugeaud \cite[Section 7]{AB10},
Bugeaud and Durand \cite{BD}, Fan et al.\,\cite[Remark 2]{FLWW13}, Falconer \cite[Chapter 8]{lesFal90}, F\"{a}rm and Persson \cite[Section 4.2]{FP11},
Hawkes \cite{lesHaw} and Levesley, Salp and Velani \cite{lesLSV}.
\end{remark}

To complete the result of Theorem \ref{cfx}, we consider the Hausdorff dimension of the set
\[
E_\phi:=\left\{x\in \Lambda: \liminf\limits_{n\to\infty}\frac{\log a_n(x)}{\phi(n)}=1\right\},
\]
where $\phi:\mathbb{N} \rightarrow\mathbb{R}_+$ is a function such that $\phi(n) \to \infty$ as $n \to \infty$. If $\phi$ satisfies $\limsup_{n \to \infty} \phi(n)/\log n =0$, then we conclude from Theorem \ref{cfx} that the Hausdorff dimension of $E_\phi$ is always equal to zero. When $\lim_{n\to\infty}\phi(n) /\log n=\alpha$, the Hausdorff dimension of $E_\phi$ is same as that of $E(\alpha)$. For the other case, we have

\begin{theorem}\label{ybphi}
Let $\phi:\mathbb{N} \rightarrow\mathbb{R}_+$ be a non-decreasing function and $\phi(n)/\log n \to\infty$ as $n \to \infty$.
Then
\[
\dim_{\rm H}E_\phi=\frac{1}{B+1},
 \]
 where $B$ is defined as
 \[
\log B:=\limsup\limits_{n\to\infty}\frac{\log\phi(n)}{n}.
 \]
\end{theorem}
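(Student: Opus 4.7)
The plan is to sandwich $\dim_{\rm H} E_\phi = 1/(B+1)$ between a Cantor-like subset (for the lower bound) and a cluster-scale cover (for the upper bound
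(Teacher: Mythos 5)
Your proposal stops at the level of strategy: ``sandwich the dimension between a Cantor-like subset and a cover'' is the generic outline of essentially every Hausdorff dimension computation, and nothing in it engages with the specific difficulties of $E_\phi$. As written (the sentence is even cut off), there is no construction, no covering estimate, and no dimension formula, so there is nothing that can be checked. The two points where the actual work lies, and which your plan does not address, are the following. For the upper bound, a ``cluster-scale cover'' by itself does not produce the value $1/(B+1)$; one needs the Good--{\L}uczak type result (Lemma \ref{wlw} in the paper) that
$\dim_{\rm H}\{x: a_n(x)\geq \varphi(n)\ \forall n\geq N\}=1/(B+1)$ with $\log B=\limsup_n \log\log\varphi(n)/n$, together with the reduction
$E_\phi\subseteq\bigcup_{N\geq 1}\{x: a_n(x)\geq e^{(1-\varepsilon)\phi(n)},\ \forall n\geq N\}$ and countable stability. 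You would have to either quote or reprove such a lemma; your proposal names no mechanism for the upper bound at all.

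For the lower bound the naive Cantor-like subset fails in two ways, and this is where the paper's proof has real content. First, $E_\phi\subseteq\Lambda$, so the points you construct must have \emph{non-decreasing} partial quotients tending to infinity; the paper handles this with the modified construction $\{x: ns_n\leq a_n(x)<(n+1)s_n,\ \forall n\geq 1\}$ and its dimension formula (Lemma \ref{geshu}), not with the standard set $\{s_n\leq a_n(x)<Ns_n\}$. Second, and more seriously, if you simply impose $a_n(x)\asymp e^{\phi(n)}$, the resulting dimension is $1/(2+\xi)$ with $\xi=\limsup_n\bigl(2\log(n+1)!+\log s_{n+1}\bigr)/\log(s_1\cdots s_n)$, and for an oscillating $\phi$ (only $\limsup_n\log\phi(n)/n=\log B$ is assumed) this can be strictly less than $1/(B+1)$, while enlarging $a_n$ to smooth the oscillation risks destroying the exact condition $\liminf_n \log a_n(x)/\phi(n)=1$. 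The paper resolves this with the Liao--Rams regularization $T_j=\sup_{n\geq j}e^{\phi(n)(B+\varepsilon)^{j-n}}$, proving simultaneously that $T_j\leq T_{j+1}\leq T_j^{B+\varepsilon}$ and $\liminf_n \log T_n/\phi(n)=1$, and then takes $s_n=M\lfloor T_n\rfloor$; this is the key idea your plan is missing, and without it (or an equivalent device) the lower bound argument does not go through.
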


As an application of Theorem \ref{ybphi}, if $\phi$ is a power function, then $E_\phi$ is of Hausdorff dimension 1/2; if $\phi$ is an exponential function of the form $\phi(n)=\alpha b^n$ with $\alpha >0$ and $b>1$,
then $E_\phi$ has Hausdorff dimension $1/(b+1)$. The following figure provides a full illustration of the Hausdorff dimension of $E_\phi$, which deeply reveals how $\dim_{\rm H}E_\phi$ changes along with different functions $\phi$.
\begin{figure}[H]\label{Ephi}
\centering
\includegraphics[width=13.5cm,height=7.0cm]{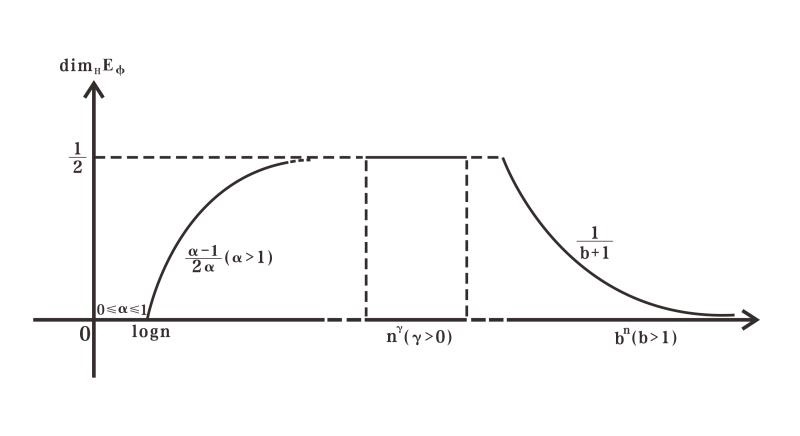}\\
\small{Figure 1: {$\dim_{\rm H} {E}_\phi$} over different functions}
\end{figure}

\section{Preliminaries}
In this section, we recall some definitions and basic properties of continued fractions, see \cite{lesIK02}. Some useful lemmas for calculating the Hausdorff dimension of fractal sets are also given.
\subsection{Elementary properties of continued fractions}
Let $x\in[0,1)\setminus\mathbb{Q}$ and its continued fraction expansion $x=[a_1(x),a_2(x),\cdots,a_n(x),\cdots]$. For any $n\geq1$, we denote by
\[\frac{p_n(x)}{q_n(x)}:=[a_1(x),a_2(x),\cdots,a_n(x)],
\]
which is called the $n$-th convergent of the continued fraction expansion of $x$, where $p_n(x)$ and $q_n(x)$ are co-prime. It is well known that $p_n(x)$ and $q_n(x)$ satisfy the following recursive formula:
\[p_{-1}=1,\ \ p_0=0,\ \ p_n=a_np_{n-1}+p_{n-2}\ (n\geq2),\]
\[q_{-1}=0,\ \ q_0=1,\ \ q_n=a_nq_{n-1}+q_{n-2}\ (n\geq2).\]
Hence $q_n\geq q_{n-1}+q_{n-2}$, which implies that
\begin{equation}\label{fn}
q_n\geq\frac{1}{\sqrt{5}}\left(\frac{1+\sqrt{5}}{2}\right)^{n}-
\frac{1}{\sqrt{5}}\left(\frac{1-\sqrt{5}}{2}\right)^{n}\geq\frac{1}{2\sqrt{5}}\left(\frac{1+\sqrt{5}}{2}\right)^{n}.
\end{equation}
\begin{definition}\label{cylinder}
For any $n\geq1$ and $a_1,a_2,\cdots,a_n\in\mathbb{N}$, we call
\[I(a_1,a_2,\cdots,a_n): =\left\{x\in[0,1]:\ a_1(x)=a_1,a_2(x)=a_2,\cdots,a_n(x)=a_n\right\}\]
the cylinder of order $n$ of continued fractions.
\end{definition}

\begin{proposition}{\cite[p. 18]{lesIK02}}\label{cd}
For any $a_1, a_2,\cdots, a_n\in\mathbb{N}$, the $n$-th order cylinder $I(a_1, a_2,\cdots, a_n)$ is an interval with the endpoints
\[\frac{p_n}{q_n}\ \ \  \text{and}\ \ \ \frac{p_n+p_{n-1}}{q_n+q_{n-1}}.\]
As a result, the length of $I(a_1, a_2,\cdots, a_n)$ equals to
\begin{equation}\label{cdgs}
|I(a_1, a_2,\cdots, a_n)|=\frac{1}{q_n(q_n+q_{n-1})}.
\end{equation}
\end{proposition}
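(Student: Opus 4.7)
The plan is to parametrize $I(a_1,\ldots,a_n)$ by a tail variable $t\in[0,1)$ and then apply the standard Möbius identity that expresses a finite continued fraction in terms of the convergents $p_{n-1},p_n,q_{n-1},q_n$. First I would show that every $x\in I(a_1,\ldots,a_n)$ is uniquely of the form $x=[a_1,\ldots,a_n+t]$ for some $t\in[0,1)$: for irrational $x$ set $t=1/[a_{n+1}(x),a_{n+2}(x),\ldots]$, and at the rational endpoint $x=[a_1,\ldots,a_n]$ set $t=0$. Conversely, each such $t\in[0,1)$ yields a point of $I(a_1,\ldots,a_n)$, so $I(a_1,\ldots,a_n)$ is exactly the image of $[0,1)$ under the map $t\mapsto [a_1,\ldots,a_n+t]$.

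Next, the key algebraic identity, proved by a short induction on $n$ using the recurrences $p_n=a_n p_{n-1}+p_{n-2}$ and $q_n=a_n q_{n-1}+q_{n-2}$, is
\[
[a_1,\ldots,a_n+t]\;=\;\frac{(a_n+t)\,p_{n-1}+p_{n-2}}{(a_n+t)\,q_{n-1}+q_{n-2}}\;=\;\frac{p_n+t\,p_{n-1}}{q_n+t\,q_{n-1}}.
\]
I would then analyze the Möbius map $f(t):=(p_n+t\,p_{n-1})/(q_n+t\,q_{n-1})$. Its derivative $(p_{n-1}q_n-p_n q_{n-1})/(q_n+t\,q_{n-1})^{2}=(-1)^{n}/(q_n+t\,q_{n-1})^{2}$ never vanishes on $[0,1]$, so $f$ is strictly monotone there. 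Consequently $f([0,1])$ is a closed interval with endpoints $f(0)=p_n/q_n$ and $f(1)=(p_n+p_{n-1})/(q_n+q_{n-1})$, and therefore $I(a_1,\ldots,a_n)$ is itself an interval with those two endpoints.

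Finally, for the length formula I would compute directly
\[
\left|\frac{p_n}{q_n}-\frac{p_n+p_{n-1}}{q_n+q_{n-1}}\right|\;=\;\frac{|p_n q_{n-1}-p_{n-1}q_n|}{q_n(q_n+q_{n-1})}\;=\;\frac{1}{q_n(q_n+q_{n-1})},
\]
invoking the classical determinant identity $p_n q_{n-1}-p_{n-1}q_n=(-1)^{n-1}$, which follows by a one-line induction from the same recurrences. The only mild subtlety is bookkeeping about whether $I(a_1,\ldots,a_n)$ is open, closed, or half-open at each endpoint — this depends on the parity of $n$, since the monotonicity direction of $f$ alternates, and on whether one includes the rational point $[a_1,\ldots,a_n]$ in the cylinder per Definition~\ref{cylinder}. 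Because the proposition merely asserts that the cylinder is an interval with the stated endpoints, this detail does not affect the conclusion and can be absorbed by passing to the closure. I expect this endpoint bookkeeping to be the only place where one must take care; everything else is routine algebra driven by the recurrences for $p_n$ and $q_n$.
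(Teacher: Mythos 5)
Your proof is correct and is the standard argument; the paper itself offers no proof to compare against, since Proposition~\ref{cd} is quoted directly from Iosifescu and Kraaikamp \cite[p.~18]{lesIK02}. One notational quibble: under this paper's convention $[b_1,b_2,\ldots]=1/(b_1+1/(b_2+\cdots))\in(0,1)$, the tail variable should be $t=[a_{n+1}(x),a_{n+2}(x),\ldots]$ itself rather than its reciprocal (which would exceed $1$); with that fixed, the identity $[a_1,\ldots,a_n+t]=(p_n+tp_{n-1})/(q_n+tq_{n-1})$, the monotonicity of the M\"obius map via $p_nq_{n-1}-p_{n-1}q_n=(-1)^{n-1}$, and the resulting length computation all go through exactly as you describe, and your deferral of the open/closed endpoint bookkeeping is appropriate since the proposition only asserts the endpoints and the length.
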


\subsection{Some useful lemmas to estimate Hausdorff dimension }

We collect and establish some useful lemmas for estimating the Hausdorff dimension of certain sets in continued fractions.
The first lemma is due to Fan at al.\,\cite{FLWW09}, which has been used by Fan at al.\,\cite{FLWW13}, Liao and Rams \cite{LR16, lesLR16S} to calculate Hausdorff dimension of fractal sets related to the growth rate of partial quotients.

\begin{lemma}\cite[Lemma 3.2]{FLWW09}\label{flww}
Let $\{s_n\}_{n\geq1}$ be a sequence of positive integers tending to infinity with $s_n\geq3$ for all $n\geq1$. Then for any positive number $N\geq2$,
\[\dim_{\rm H}\{x\in[0,1): s_n\leq a_n(x)<Ns_n, \forall n\geq1\}=
\liminf\limits_{n\to\infty}\frac{\log(s_1s_2\cdots s_n)}{2\log(s_1s_2\cdots  s_n)+\log s_{n+1}}.\]
\end{lemma}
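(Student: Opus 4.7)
The plan is to sandwich the Hausdorff dimension between matching upper and lower bounds: a geometric cover for the upper bound, and a Bernoulli-type measure combined with the mass distribution principle for the lower bound. Throughout the argument write $\Pi_n := s_1 s_2 \cdots s_n$ and $M_k := \#\{a \in \mathbb{N} : s_k \le a < Ns_k\}$, and let $E$ denote the target level set; note that $s_k \le M_k \le (N-1)s_k + 1$. Set the candidate dimension
\[
\theta := \liminf_{n \to \infty} \frac{\log \Pi_n}{2\log \Pi_n + \log s_{n+1}} \in [0, 1/2].
\]
The central geometric object in both halves of the argument is the \emph{grouped cylinder}
\[
J_n(a_1, \ldots, a_n) := \bigcup_{s_{n+1} \le a_{n+1} < Ns_{n+1}} I(a_1, \ldots, a_n, a_{n+1}),
\]
which is an interval, since the $a_{n+1}$ range over a contiguous set of integers; Proposition \ref{cd} together with the recursion for $q_{n+1}$ gives $|J_n(a_1,\ldots,a_n)|$ of order $1/(q_n^{2}\, s_{n+1})$, with implicit constants depending only on $N$.

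For the upper bound, the collection $\{J_n(a_1,\ldots,a_n)\}$ indexed by admissible tuples (with $s_k \le a_k < Ns_k$) covers $E$ for every $n$. Combining the recursion bound $q_n \ge \Pi_n$ with the elementary estimate $\sum_{s \le a < Ns} a^{-2t} \le (N-1)\, s^{\,1-2t}$, valid for $2t<1$, the $t$-sum factors across the index $k$ and yields
\[
\sum_{(a_1,\ldots,a_n)} |J_n(a_1, \ldots, a_n)|^t \;\le\; C(N)^{n}\, \Pi_n^{\,1-2t}\, s_{n+1}^{-t}.
\]
Since $s_n \to \infty$, one has $n = o(\log \Pi_n)$ by Ces\`aro, so the factor $C(N)^n$ is negligible compared with the power factors. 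Hence the right-hand side tends to zero along the subsequence realising the liminf in $\theta$ whenever $t > \theta$, and because $|J_n| \to 0$ this gives $\mathcal{H}^t(E)=0$, so $\dim_{\rm H} E \le \theta$.

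For the lower bound, I construct a Bernoulli-type probability measure $\mu$ on $E$ by spreading mass uniformly at each level: declare $\mu(I(a_1, \ldots, a_n)) = (M_1 M_2 \cdots M_n)^{-1}$ for each admissible level-$n$ cylinder. By the mass distribution principle it suffices to show that for every $t < \theta$ the bound $\mu(B(x,r)) \le C(t,N)\, r^t$ holds for every $x \in E$ and every sufficiently small $r$. Given such a ball, locate the unique $n$ with $|J_n(x)| \le r < |J_{n-1}(x)|$, where $J_n(x)$ denotes the grouped cylinder containing $x$. Then $B(x,r)$ meets only a bounded number of level-$(n-1)$ grouped cylinders, and we estimate $\mu(B(x,r))$ by counting the level-$(n+1)$ sub-cylinders of these that the ball can hit, each of which carries mass $(M_1 \cdots M_{n+1})^{-1}$. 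The geometric formula $|J_n(x)|$ of order $1/(q_n^{2} s_{n+1})$, combined with the two-sided comparison $\Pi_n \le q_n \le (2N)^n\, \Pi_n$, translates the count into the desired $r^t$ bound precisely in the range $t < \theta$.

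The principal technical obstacle is this ball estimate in the lower bound: within one level-$(n-1)$ cylinder the level-$n$ sub-cylinders vary in length by a factor of order $N^2$ (as $a_n$ runs through $[s_n, Ns_n)$), so no single cylinder level cleanly matches a generic scale $r$. The grouped-cylinder device is designed exactly to resolve this, since packaging the block $\{a_{n+1} \in [s_{n+1}, Ns_{n+1})\}$ into a single geometric unit of length $\asymp (q_n^{2} s_{n+1})^{-1}$ matches the way $\mu$ distributes mass across that block and produces the factor $s_{n+1}$ that appears in the denominator of $\theta$. Once the two scales are aligned in this way, the upper and lower estimates meet at the common value $\theta$, which is what we wanted.
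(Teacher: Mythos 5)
First, note that the paper does not prove Lemma \ref{flww} at all: it is quoted from \cite[Lemma 3.2]{FLWW09}, and the paper's own machinery for statements of this kind is Lemma \ref{cxj} (Falconer's nested-interval construction), which it uses to prove the variant Lemma \ref{geshu}; there the lower bound comes from counting children and estimating gaps, with no measure ever constructed. Your route --- covering by grouped cylinders for the upper bound, and a uniform Bernoulli measure plus the mass distribution principle for the lower bound --- is therefore different from the paper's technique for its analogous lemma, but it is essentially the strategy of the original proof in \cite{FLWW09}. Your upper half is correct and complete in outline: $|J_n|\asymp (q_n^2 s_{n+1})^{-1}$, $q_n\ge s_1\cdots s_n$, the factorised sum $\sum |J_n|^t\le C(N)^n (s_1\cdots s_n)^{1-2t}s_{n+1}^{-t}$, and $n=o(\log(s_1\cdots s_n))$ do give $\mathcal H^t=0$ along the liminf subsequence for every $t>\theta$. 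The measure-based lower bound is also a sound plan and, once carried out, yields exactly the exponent $\theta$, because $\mu(J_n)=\mu(I_n)=(M_1\cdots M_n)^{-1}$ while $|J_n|\asymp(q_n^2 s_{n+1})^{-1}$, so the factor $s_{n+1}$ enters the local dimension in the right way; what Lemma \ref{cxj} buys the paper is that all of this bookkeeping is absorbed into one ready-made lemma, at the price of slightly cruder gap estimates, whereas your measure argument is self-contained and generalises more readily.

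One step of your lower bound is fragile as literally stated: after locating $n$ with $|J_n(x)|\le r<|J_{n-1}(x)|$, you propose to bound $\mu(B(x,r))$ by the number of level-$(n+1)$ cylinders met, each weighted $(M_1\cdots M_{n+1})^{-1}$. If that count is done naively (length of the ball divided by the minimal length of an admissible level-$(n+1)$ cylinder, which is $\asymp (q_{n-1}^2 s_n^2 s_{n+1}^2)^{-1}$), then at scales $r\asymp|J_{n-1}(x)|$ you get the bound $\asymp s_{n+1}(M_1\cdots M_n)^{-1}$ up to $C^n$, which exceeds the true bound $\asymp\mu(J_{n-1})\asymp(M_1\cdots M_{n-1})^{-1}$ by roughly $s_{n+1}/(s_1\cdots s_{n-1}s_n^0)$ and is useless precisely when $s_{n+1}$ is enormous compared with $s_1\cdots s_n$ --- which is exactly the regime where $\theta<1/2$ and the lemma has content. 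The repair is standard but must be said: count at level $n$ instead (each admissible level-$n$ cylinder hit contributes at most its full mass $(M_1\cdots M_n)^{-1}$, and their number is $\lesssim 1+r\,q_{n-1}^2 s_n^2$), cap this by the mass of the boundedly many level-$(n-1)$ grouped cylinders the ball can meet (your gap argument, made quantitative via the blocks $a_n<s_n$ and $a_n\ge Ns_n$, justifies the boundedness), and then interpolate, $\min\{A,Br\}\le A^{1-t}(Br)^t$, which yields $\mu(B(x,r))\lesssim C^n r^t$ exactly under $t\le \log(s_1\cdots s_{n-1})/(2\log(s_1\cdots s_{n-1})+\log s_n)$, i.e.\ for all $t<\theta$ and $n$ large. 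With that adjustment (and the $C^n$ factors absorbed using $n=o(\log(s_1\cdots s_n))$), your proof goes through.
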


The points in the set $\{x\in[0,1): s_n\leq a_n(x)<Ns_n, \forall n\geq1\}$ cannot guarantee that
their partial quotients are non-decreasing, so we cannot apply the result of Lemma \ref{flww} directly for establishing the lower bound for $\dim_{\rm H} E(\alpha)$.
This leads us to modify the set in Lemma \ref{flww} and calculate its Hausdorff dimension.
To this end, we introduce a useful lemma (see \cite[Example 4.6]{lesFal90}), which provides a method to get a lower bound for the Hausdorff dimension of a fractal set.

\begin{lemma}\label{cxj}
Let $\mathbb{E}=\bigcap_{n\geq0}\mathbb{E}_n$, where $[0,1]=\mathbb{E}_0\supset \mathbb{E}_1\supset\cdots$ is a decreasing sequence of subsets in $[0,1]$ and $\mathbb{E}_n$ is a union of
a finite number of disjoint closed intervals such that each interval in $\mathbb{E}_{n-1}$ contains at least $m_n$ intervals in $\mathbb{E}_n$ which are separated by
gaps of lenghs at least $\varepsilon_n$. If $m_n\geq2$ and $\varepsilon_n>\varepsilon_{n+1}>0$, then
\[\dim_{\rm H} \mathbb{E}\geq\liminf\limits_{n\rightarrow\infty}\frac{\log(m_1m_2\cdots m_n)}{-\log(m_{n+1}\varepsilon_{n+1})}.\]
\end{lemma}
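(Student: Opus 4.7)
The plan is to apply the mass distribution principle: construct a Borel probability measure $\mu$ supported on $\mathbb{E}$ and show that, for any $s$ strictly below the target $\liminf$, one has $\mu(U)\le C|U|^{s}$ for every set $U$ of sufficiently small diameter.

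The measure $\mu$ would be built level by level. At step $n$, each interval $I$ of $\mathbb{E}_{n-1}$ already carries some mass; I would select exactly $m_n$ of its children in $\mathbb{E}_n$ (discarding any extras only shrinks $\mathbb{E}$, which is harmless for a lower bound) and redistribute $\mu(I)$ equally among them. Inductively every surviving level-$n$ interval carries mass $1/(m_1 m_2\cdots m_n)$, and the limiting measure $\mu$ is supported on a Cantor-like subset of $\mathbb{E}$.

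For the estimate on $\mu(U)$, given $U$ with small diameter, pick the unique $n$ such that $\varepsilon_{n+1}\le |U|<\varepsilon_n$; such an $n$ exists thanks to the hypothesis $\varepsilon_n>\varepsilon_{n+1}>0$. Because the intervals of $\mathbb{E}_n$ are pairwise separated by gaps of at least $\varepsilon_n>|U|$, the set $U$ meets at most one of them, yielding the coarse bound $\mu(U)\le 1/(m_1\cdots m_n)$. Inside that single parent interval the $m_{n+1}$ chosen children are pairwise separated by gaps of at least $\varepsilon_{n+1}$, so a pigeonhole count bounds the number met by $U$ by $|U|/\varepsilon_{n+1}+1\le 2|U|/\varepsilon_{n+1}$, giving the fine bound $\mu(U)\le 2|U|/(\varepsilon_{n+1}m_1\cdots m_{n+1})$. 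Splitting at the crossover $|U|=m_{n+1}\varepsilon_{n+1}$ and using the coarse bound when $|U|\ge m_{n+1}\varepsilon_{n+1}$ and the fine bound otherwise, a short inequality manipulation turns the hypothesis $m_1\cdots m_n\ge (m_{n+1}\varepsilon_{n+1})^{-s}$ (valid for all large $n$ whenever $s$ lies below the target $\liminf$) into $\mu(U)\le C|U|^{s}$ for some constant $C=C(s)$. The mass distribution principle then gives $\dim_{\rm H}\mathbb{E}\ge s$, and letting $s$ approach the $\liminf$ finishes the proof.

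The main technical point is the fine bound: one has to check that separation of $m_{n+1}$ siblings by $\varepsilon_{n+1}$ alone, with no \emph{a priori} upper bound on the length of a child, still limits the number of children met by $U$ to $|U|/\varepsilon_{n+1}+1$. This rests on the elementary observation that $k$ intervals arranged in order and pairwise separated by gaps of at least $\varepsilon_{n+1}$ force the left endpoint of the last to exceed the right endpoint of the first by at least $(k-1)\varepsilon_{n+1}$, and that this span must fit between the endpoints of $U$ whenever every interval meets $U$. Once this is nailed down, the rest is a routine manipulation of inequalities, and I would expect no further obstacles.
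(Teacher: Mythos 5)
Your proof is correct and takes essentially the same route as the source the paper relies on: the paper does not prove this lemma itself but cites Falconer's Example 4.6, whose argument is exactly your construction of an equally-distributed mass on $m_1\cdots m_n$ chosen level-$n$ intervals, the choice of $n$ with $\varepsilon_{n+1}\le |U|<\varepsilon_n$, and the coarse bound $\mu(U)\le (m_1\cdots m_n)^{-1}$ together with the fine bound $\mu(U)\le 2|U|/(\varepsilon_{n+1}m_1\cdots m_{n+1})$, followed by the mass distribution principle. The only cosmetic difference is that Falconer merges the two bounds via $\min(a,b)\le a^{s}b^{1-s}$ instead of splitting at the crossover $|U|=m_{n+1}\varepsilon_{n+1}$, which is equivalent.
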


As a consequence of Lemma \ref{cxj}, we have the following result which plays an important role in the proofs of our main results. For completeness, we give a direct and simple proof. See Liao and Rams \cite{lesLR19} for more general result in $d$-decaying Gauss like iterated function systems.

%

\begin{lemma}\label{geshu}
Let $\{s_n\}_{n\geq1}$ be a sequence of positive integers tending to infinity with $s_n\geq3$ for all $n\geq1$. Write
\[
\mathbb{F}=\big\{x\in[0,1): ns_n\leq a_n(x)<(n+1)s_n, \forall\ n\geq1\big\}.
\]
Then
\[\dim_{\rm H}\mathbb{F}=\frac{1}{2+\xi},\ \ \text{with}\
\xi:=\limsup\limits_{n\to\infty}\frac{2\log(n+1)!+
\log s_{n+1}}{\log(s_1 s_2\cdots s_n)}.\]
\end{lemma}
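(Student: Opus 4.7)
The plan is to establish $\dim_{\rm H}\mathbb{F}=1/(2+\xi)$ by matching upper and lower bounds, the former from a covering argument and the latter from Lemma~\ref{cxj}. Both rely on the same auxiliary intervals
\[
J_n(a_1,\ldots,a_n):=cl\Bigl(\bigcup_{b=(n+1)s_{n+1}}^{(n+2)s_{n+1}-1} I(a_1,\ldots,a_n,b)\Bigr),
\]
defined for each \emph{admissible} block $(a_1,\ldots,a_n)$, meaning $ks_k\le a_k<(k+1)s_k$ for every $1\le k\le n$. Each $J_n$ is a closed subinterval of $I(a_1,\ldots,a_n)$ and $\mathbb{F}=\bigcap_{n\ge 1}\bigcup_{(a_1,\ldots,a_n)\text{ adm.}}J_n(a_1,\ldots,a_n)$. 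The basic numerical input is: since $a_k\ge ks_k\ge 3k$, the classical bounds $\prod a_k\le q_n\le\prod(a_k+1)$ together with $\sum 1/a_k=O(\log n)$ yield $\log q_n=\log n!+S_n+O(\log n)$, where $S_n:=\log(s_1s_2\cdots s_n)$; and the tail-sum estimate $\sum_{b\ge M}|I(a_1,\ldots,a_n,b)|\asymp 1/(Mq_n^{2})$ combined with \eqref{cdgs} gives $|J_n|\asymp 1/((n+1)^{2}s_{n+1}q_n^{2})$.

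For the upper bound I cover $\mathbb{F}$ by the $J_n$'s, whose diameters tend to $0$. Multiplying the count $\prod_{k\le n}s_k$ of admissible blocks by the maximum length (attained when $q_n$ is near its minimum $n!\prod_{k\le n}s_k$) and taking logarithms yields
\[
\log\sum_{\text{adm.}}|J_n|^{s}\le-(2s-1)S_n-s\bigl(2\log(n+1)!+\log s_{n+1}\bigr)+O(\log n).
\]
Along a subsequence on which $(2\log(n+1)!+\log s_{n+1})/S_n\to\xi$, the leading coefficient in front of $S_n$ becomes $(2s-1)+s\xi+o(1)$, strictly positive precisely when $s>1/(2+\xi)$; hence $\mathcal{H}^{s}(\mathbb{F})=0$ and $\dim_{\rm H}\mathbb{F}\le 1/(2+\xi)$.

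For the lower bound I apply Lemma~\ref{cxj} with $\mathbb{E}_n:=\bigcup_{(a_1,\ldots,a_n)\text{ adm.}}J_n(a_1,\ldots,a_n)$. These form a decreasing sequence of finite unions of closed intervals and, inside each component $J_{n-1}(a_1,\ldots,a_{n-1})$ of $\mathbb{E}_{n-1}$, there are exactly $m_n:=s_n$ intervals of $\mathbb{E}_n$. The key ingredient is a uniform gap estimate $\varepsilon_n\ge c/q_n^{2}$; granted this and the automatic monotonicity $\varepsilon_n>\varepsilon_{n+1}$ coming from $q_{n+1}>q_n$, Lemma~\ref{cxj} together with $\log q_{n+1}=\log(n+1)!+S_n+\log s_{n+1}+O(\log n)$ gives
\[
\frac{\log(m_1\cdots m_n)}{-\log(m_{n+1}\varepsilon_{n+1})}=\frac{S_n}{2S_n+2\log(n+1)!+\log s_{n+1}+O(\log n)},
\]
and $S_n\ge n\log 3$ absorbs the $O(\log n)$ correction, so taking $\liminf$ yields $\dim_{\rm H}\mathbb{F}\ge 1/(2+\xi)$.

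The main obstacle is proving the gap estimate $\varepsilon_n\ge c/q_n^{2}$. The key geometric observation is that two adjacent cylinders $I(a_1,\ldots,a_n)$ and $I(a_1,\ldots,a_n\pm 1)$ share a single endpoint at which the sub-cylinders $I(a_1,\ldots,a_n,b)$ accumulate as $b\to\infty$ from one side, while emerging starting with the $b=1$ sub-cylinder from the other. Consequently at least one of the two regions cut out when passing from $I$ to $J_n$ contains the entire block $\bigcup_{b<(n+1)s_{n+1}}I(\cdot,b)$, whose length is a uniform constant fraction of $|I|\asymp 1/q_n^{2}$, yielding the required lower bound on $\varepsilon_n$.
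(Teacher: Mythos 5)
Your proof is correct and follows essentially the same route as the paper: the upper bound by covering $\mathbb{F}$ with the unions $J_n$ of admissible subcylinders, and the lower bound by applying Lemma \ref{cxj} to the nested sets $\mathbb{E}_n=\bigcup J_n$ with gaps of order $1/q_n^{2}$ (the paper separates by the single cylinder $I(\cdot,1)$ rather than the whole small-index block, which is the same order). The only cosmetic differences are that you use the truncated union in both bounds, and your covering estimate states the bound on $|J_n|$ in the correct upper-bound direction with an explicit subsequence realizing $\xi$, where the paper's write-up is looser.
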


\begin{proof}
\textbf{Lower bound:} We will use Lemma \ref{cxj} to get the lower bound of $\dim_{\rm H}\mathbb{F}$. In order to describe the construction of $\mathbb{E}_n$, we shall make use of the symbolic space, which is explained as follows: for any $n\geq1$, we define
    \[
    \mathcal{D}_n=\big\{(\sigma_1,\cdots,\sigma_n)\in\mathbb{N}^{n}:\ ks_k\leq \sigma_k<(k+1)s_k,\forall\ 1\leq k\leq n\big\}.
    \]
    For any $n\geq1$ and $(\sigma_1,\cdots,\sigma_n)\in \mathcal{D}_n$, we denote
    \[ J(\sigma_1,\cdots,\sigma_n)=
    cl\bigcup_{\sigma_{n+1}} I(\sigma_1,\cdots,\sigma_n,\sigma_{n+1})\]
    and call it the basic interval of order $n$, where the union is take over all $\sigma_{n+1}$ such that $(\sigma_1,\cdots,\sigma_n,\sigma_{n+1})\in \mathcal{D}_{n+1}$. 
    Put $\mathbb{E}_0 \equiv [0,1]$ and
    \[
    \mathbb{E}_n=\bigcup\limits_{(\sigma_1,\cdots,\sigma_n)\in \mathcal{D}_n}J(\sigma_1,\cdots,\sigma_n)\ \ \text{for all}\ n\geq1.
    \]
    Then
    \[\mathbb{E}=\bigcap_{n=0}\mathbb{E}_n=\bigcap_{n=0}\bigcup\limits_{(\sigma_1,\cdots,\sigma_n)\in \mathcal{D}_n}J(\sigma_1,\cdots,\sigma_n)=\mathbb{F}.\]
    It follows from the construction of $\mathbb{E}$ that each element in $\mathbb{E}_{n-1}$ contains some numbers of the basic intervals of order $n$ in $\mathbb{E}_n$, we denote this number by $m_n$. For all $n\geq1$,
    \[
    m_n=(n+1)s_n-1-ns_n=s_n-1\geq2.
    \]
Next we are going to estimate the gaps between two basic intervals of the same order. For any $n\geq1$, let $J(\tau_1,\tau_2,\cdots,\tau_n)$ and $J(\sigma_1,\sigma_2,\cdots,\sigma_n)$ be the two distinct basic intervals in $\mathbb{E}_n$. We know that the intervals $J(\tau_1,\tau_2,\cdots,\tau_n)$ and $J(\sigma_1,\sigma_2,\cdots,\sigma_n)$ are separated by the $(n+1)$-th order cylinder
 \[
 I(\tau_1,\tau_2,\cdots,\tau_n,1) \ \ \ \text{or}\ \ \ I(\sigma_1,\sigma_2,\cdots,\sigma_n,1)
 \]
according to the location between $J(\tau_1,\tau_2,\cdots,\tau_n)$ and $J(\sigma_1,\sigma_2,\cdots,\sigma_n)$. By \eqref{cdgs},
\begin{align*}
    |I(\sigma_1,\sigma_2,\cdots,\sigma_n,1)|&\geq \frac{1}{2q^2_{n+1}}\geq\frac{1}{8}\left(\prod\limits_{k=1}^{n}(\sigma_k+1)\right)^{-2}\\
    &\geq \frac{1}{8}\left(\prod\limits_{k=1}^{n}\big((k+1)s_k\big)\right)^{-2}:=\varepsilon_n.
\end{align*}
It is easy to see that for all $n\geq1$, $0<\varepsilon_{n+1}<\varepsilon_n$. Similarly, we can also obtain $|I(\tau_1,\cdots,\tau_k,1)|\geq\varepsilon_n$.
By Lemma \ref{cxj}, we have
 \begin{align*}
    \dim_{\rm H}\mathbb{F} &\geq \liminf\limits_{n\to\infty}\frac{\sum\limits_{k=1}^{n}\log (s_k-1)}
    {-\log(s_{n+1}-1)+\log8+2\sum\limits_{k=1}^{n+1}\log\big((k+1)s_k\big)}\\
    &= \frac{1}{2+\limsup\limits_{n\to\infty}\frac{2\log(n+1)!+
\log s_{n+1}}{\log(s_1 s_2\cdots s_n)}}.
    \end{align*}

\textbf{Upper bound:} To obtain the upper bound of $\dim_{\rm H}\mathbb{F}$, we need to find a cover of the set $\mathbb{F}$.
Let
\[ J(a_1,a_2,\cdots,a_n)=
   cl \bigcup_{a_{n+1}\geq(n+1)s_{n+1}} I(a_1,a_2,\cdots,a_n,a_{n+1}).\]
Then we have
    \[\mathbb{F}\subseteq\bigcap_{n=1}^{\infty}\bigcup_{ks_k\leq a_k <(k+1)s_k,1\leq k\leq n}J(a_1,a_2,\cdots,a_n).\]
It means that for any $n \geq 1$,
\[
\bigcup_{ks_k\leq a_k <(k+1)s_k,1\leq k\leq n}J(a_1,a_2,\cdots,a_n)
\]
is a cover of $\mathbb{F}$. The number of such basic intervals is
\[\prod\limits_{k=1}^{n}\big((k+1)s_k-1-ks_k\big)=\prod\limits_{k=1}^{n}\big(s_k-1\big),\]
and the length of such a basic interval is estimated as
\begin{align*}
|J(a_1,a_2,\cdots,a_n)|&\geq \frac{1}{3}
|I(a_1,a_2,\cdots,a_n)|\times\sum\limits_{a_{n+1}(x)\geq(n+1)s_{n+1}}\frac{1}{a^{2}_{n+1}(x)}\\
&\geq \frac{1}{6q^2_n}\times\frac{1}{(n+1)s_{n+1}}\\
&\geq \frac{1}{6(n+1)s_{n+1}\cdot2^n\cdot\big(\prod\limits_{k=1}^{n}((k+1)s_k)\big)^2}.
\end{align*}
Therefore, we obtain the upper bound
\begin{align*}
\dim_{\rm H}\mathbb{F}&\leq \liminf\limits_{n\to\infty}
\frac{\sum\limits_{k=1}^{n}\log(s_k-1)}{-\log|J(a_1,a_2,\cdots,a_n)|}\\
&\leq \liminf\limits_{n\to\infty}\frac{\sum\limits_{k=1}^{n}\log(s_k-1)}
{\log\big(6(n+1)\big)+\log s_{n+1}+2\sum\limits_{k=1}^{n}\log s_k+2\log(n+1)!}\\
&= \frac{1}{2+\limsup\limits_{n\to\infty}\frac{2\log(n+1)!+
\log s_{n+1}}{\log(s_1 s_2\cdots s_n)}}.
\end{align*}
\end{proof}

\indent The following result essentially comes from Good \cite{lesGood41} and {\L}uczak \cite{lesLuc97}, which provides an important tool to estimate the upper bound for the Hausdorff dimension of some fractal sets. Such a result also appeared in the PhD thesis of Baowei Wang in Chinese,
we here write its proof for completeness.

\begin{lemma}\label{wlw}
Let $\varphi: \mathbb{N} \to \mathbb{R}_+$ be a function such that $\varphi(n) \to \infty$ as $n \to \infty$. Then
\[
\dim_{\rm H}\big\{x\in[0,1): a_n(x)\geq \varphi(n) , \forall\  n\geq 1\big\}=\frac{1}{B+1},
\]
where $B$ is given by
\[\log B:=\limsup\limits_{n\to\infty}\frac{\log\log\varphi(n)}{n}.\]
\end{lemma}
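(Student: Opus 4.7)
Write $E := \{x \in [0,1) : a_n(x) \geq \varphi(n),\ \forall n \geq 1\}$ and $d := 1/(B+1)$. The plan is to prove $\dim_{\rm H} E = d$ via matching inequalities, each drawing on one direction of the lim sup $\log B = \limsup_n (\log\log\varphi(n))/n$.

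For the lower bound, I will apply Lemma \ref{flww} to a smoothed integer sequence. For any $B' > B$, the lim sup gives $\log\varphi(n) \leq (B')^n$ for all large $n$, so $s_n := \max\{3, \lceil e^{(B')^n}\rceil\}$ yields an integer sequence with $s_n \geq \varphi(n)$ eventually, and therefore
\[
\big\{x \in [0,1) : s_n \leq a_n(x) < 2 s_n,\ \forall n \geq 1\big\} \subseteq E.
\]
Lemma \ref{flww} then computes the dimension of the left side as
\[
\liminf_n \frac{\sum_{k=1}^n \log s_k}{2 \sum_{k=1}^n \log s_k + \log s_{n+1}},
\]
and a direct geometric-series computation with $\log s_k = (B')^k(1+o(1))$ and $\sum_{k=1}^n (B')^k \sim (B')^{n+1}/(B'-1)$ shows this equals $1/(B'+1)$. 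Taking $B' \downarrow B$ gives $\dim_{\rm H} E \geq d$.

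For the upper bound, I will cover $E$ at scales selected by the lim sup. For any $B'' < B$ the lim sup supplies an infinite set $\mathcal{N} \subseteq \mathbb{N}$ with $\log\varphi(n) \geq (B'')^n$ for every $n \in \mathcal{N}$, so $E \subseteq A_n := \{x : a_n(x) \geq e^{(B'')^n}\}$ for each $n \in \mathcal{N}$. Cover each $A_n$ by the truncated cylinders
\[
J(a_1, \ldots, a_{n-1}) := \bigcup_{a_n \geq e^{(B'')^n}} I(a_1, \ldots, a_n), \qquad |J| \leq 2\, q_{n-1}^{-2}\, e^{-(B'')^n}.
\]
For every $s > 1/(B''+1)$, I plan to bound $\sum_{(a_1,\ldots,a_{n-1})} |J|^s$ and show it tends to zero along $n \in \mathcal{N}$; this forces $\dim_{\rm H} E \leq s$, and letting $s \downarrow 1/(B''+1)$ and then $B'' \uparrow B$ yields $\dim_{\rm H} E \leq d$.

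The main obstacle is controlling $\sum |J|^s$ precisely when $s \leq 1/2$, i.e., when $B > 1$; in this regime the unrestricted partition sum $\sum q_{n-1}^{-2s}$ diverges, so a naive bound fails. The way around is a multi-scale cover in which the range of each $a_i$ is stratified into dyadic shells, with the doubly-exponentially small factor $e^{-s(B'')^n}$ beating any exponential-in-$n$ slack produced by the stratification (for $B'' \leq 1$ the elementary bound $\dim_{\rm H} E \leq 1/2$ suffices directly). This careful bookkeeping, following the schemes of Good and \L{}uczak, is the technical crux of the proof.
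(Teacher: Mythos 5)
Your lower bound is correct in substance but takes a different route from the paper: the paper sandwiches $E:=\{x: a_n(x)\ge\varphi(n)\ \forall n\}$ using the known identity \eqref{Luc} (Good's Lemma 1 plus {\L}uczak's theorem), whereas you invoke Lemma \ref{flww} with $s_n\approx e^{(B')^n}$, and your $\liminf$ computation does give $1/(B'+1)$. One slip to repair: with $s_n=\max\{3,\lceil e^{(B')^n}\rceil\}$ you only have $s_n\ge\varphi(n)$ \emph{eventually}, so the inclusion $\{s_n\le a_n(x)<2s_n,\ \forall n\ge1\}\subseteq E$ can fail outright at the initial indices (if $\varphi(1)>2s_1$ the two sets are even disjoint). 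Take $s_n=\max\{3,\lceil\varphi(n)\rceil,\lceil e^{(B')^n}\rceil\}$ instead; this alters only finitely many terms, leaves the $\liminf$ in Lemma \ref{flww} unchanged, and the bound $1/(B'+1)\to 1/(B+1)$ survives.

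The genuine gap is the upper bound. You correctly observe that covering $A_n=\{x: a_n(x)\ge e^{(B'')^n}\}$ by the truncated cylinders $J(a_1,\dots,a_{n-1})$ makes $\sum|J|^s$ diverge for every $s\le 1/2$, but your remedy is only announced, not executed: the ``multi-scale cover with dyadic shells'' is exactly the delicate point, and it is not routine bookkeeping. A per-coordinate stratification that still counts individual cylinders diverges for $s\le1/2$ just as before, while collapsing each shell into a single covering interval destroys the hierarchy at later coordinates (pushed to every coordinate it would yield dimension $0$, which is false); the real argument must decide, prefix by prefix, where to subdivide and where to collapse, and balancing these two options to land precisely at the exponent $1/(B''+1)$ is the entire content of Good's and {\L}uczak's proofs. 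The paper sidesteps this by quoting the finished results as black boxes: for $B\in(1,\infty)$ it uses $E\subseteq\{x: a_n(x)\ge e^{(B-\varepsilon)^n}\ \text{i.o.}\}$, whose dimension is given by \eqref{Luc}, for $B=1$ it uses $E\subseteq\{x: a_n(x)\to\infty\}$ of dimension $1/2$ (Good), and for $B=\infty$ the same i.o.\ containment with an arbitrary large base. If you likewise cite {\L}uczak's theorem, your containment $E\subseteq A_n$ for infinitely many $n\in\mathcal N$ closes the upper bound at once; if you insist on reproving it, the ``careful bookkeeping'' you defer \emph{is} the proof, and as written it is missing.
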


\begin{proof}
Let $a,b >1$ be fixed. It follows from Good \cite[Lemma 1]{lesGood41} and the main theorem of {\L}uczak \cite{lesLuc97} that for any $N_0 \geq 1$,
\begin{align}\label{Luc}
\dim_{\rm H}\big\{x\in[0,1): a_n(x)&\geq a^{b^n}, \forall\  n\geq N_0 \big\} \notag\\
&= \dim_{\rm H}\big\{x\in[0,1): a_n(x)\geq a^{b^n} i.o.~n\big\} =\frac{1}{b+1},
\end{align}
where $i.o.$ means infinitely often.

{\bf (i) $B=1$.} For any $\varepsilon>0$, by the definition of $B$, there exists $N^\prime >0$ such that $\varphi(n) \leq e^{(1+\varepsilon)^n}$ holds for all\  $n \geq N^\prime$. This implies that
\[\big\{x\in[0,1): a_n(x) \geq e^{(1+\varepsilon)^n}, \forall\  n\geq N^\prime \big\}\subseteq\big\{x\in[0,1): a_n(x)\geq \varphi(n) , \forall\  n\geq N^\prime  \big\}.\] By (\ref{Luc}), we have
\[\dim_{\rm H}\big\{x\in[0,1): a_n(x)\geq \varphi(n) , \forall\  n\geq N^\prime  \big\} \geq  1/(2+\varepsilon),\]
letting $\varepsilon \to 0^+$,
\[\dim_{\rm H}\big\{x\in[0,1): a_n(x)\geq \varphi(n) , \forall\  n\geq N^\prime  \big\} \geq 1/2.\]
 From Lemma 1 of Good \cite{lesGood41}, we have the lower bound
\[
\dim_{\rm H}\big\{x\in[0,1): a_n(x)\geq \varphi(n) , \forall\  n\geq 1\big\} \geq \frac{1}{2}.
\]
Since $\varphi(n) \to \infty$ as $n \to \infty$, we have
 \[\big\{x\in[0,1): a_n(x)\geq \varphi(n) , \forall\ n\geq 1\big\}\subseteq\big\{x\in[0,1): a_n(x) \to \infty\  \text{as}\ n \to \infty\big\}.\]
Notice that
\[\dim_{\rm H}\big\{x\in[0,1): a_n(x) \to \infty\  \text{as}\ n \to \infty\big\}=1/2.\]
Thus the upper bound is obtained.\\
{\bf (ii) $B \in (1,\infty)$.} Analysis similar to that in the part {\bf(i)}, we conclude that
\[
\dim_{\rm H}\big\{x\in[0,1): a_n(x)\geq \varphi(n) , \forall\  n\geq 1\big\} \geq \frac{1}{B+1}.
\]
For the upper bound, by the definition of $B$, for any $\varepsilon>0$, we have $\varphi(n) \geq e^{(B+\varepsilon)^n}$ holds for infinitely many $n \in \mathbb{N}$. This means that
\[\big\{x\in[0,1): a_n(x)\geq \varphi(n) , \forall\  n\geq 1\big\}\subseteq\big\{x\in[0,1): a_n(x)\geq e^{(B+\varepsilon)^n} i.o.~n\big\}.\]
Notice that
\[\dim_{\rm H}\big\{x\in[0,1): a_n(x)\geq e^{(B+\varepsilon)^n} i.o.~n\big\}=1/(B+1+\varepsilon).\]
 Let $\varepsilon \to 0^+$, the upper bound follows.

{\bf (iii) $B=\infty$.} In this case, for any large $M>2$, we know $\varphi(n) \geq e^{M^n}$ holds for infinitely many $n \in \mathbb{N}$. Then we see that
\[
\dim_{\rm H}\big\{x\in[0,1): a_n(x)\geq \varphi(n) , \forall\ n\geq 1\big\} \leq \frac{1}{M+1}.
\]
The result follows by letting $M \to \infty$.
\end{proof}
\begin{remark}
For a little more general version of this lemma, we have
\[
\dim_{\rm H}\big\{x\in[0,1): a_n(x)\geq \varphi(n) , \forall\  n\geq N \big\}=\frac{1}{B+1}
\]
holds for all $N \geq 1$.
\end{remark}
\section{Proofs of main results}
In this section, we will prove the main results of this paper.

\subsection{The proof of Theorem \ref{Fu}}
(i) To prove $\tau: [0,1) \to [0,\infty]$ is measurable, it is sufficient to show that for any $\alpha \in (0,\infty)$, the set $\{x \in [0,1): \tau(x)<\alpha\}$ is a Borel set.
Now let $\alpha \in (0,\infty)$ be fixed. It is easy to verify that
\begin{align}\label{Mk}
\big\{x \in [0,1): \tau(x)<\alpha\big\} &= \bigcup_{k\geq k_0}\left\{x \in [0,1): \sum_{j\geq 1} a^{-(\alpha -1/k)}_j(x)<\infty \right\} \nonumber\\
&=\bigcup_{k\geq k_0} \bigcap_{\ell\geq 1} \bigcup_{m\geq 1} \bigcap_{n\geq 1} M^\alpha_{k,\ell,m,n}
\end{align}
where $k_0:= \lfloor1/\alpha\rfloor+1$ and the set $M^\alpha_{k,\ell,m,n}$ is given by
\[
 M^\alpha_{k,\ell,m,n}= \left\{x\in [0,1):  \sum^{m+n}_{j=m+1} a^{-(\alpha -1/k)}_j(x)\leq \frac{1}{\ell}\right\}.
\]
For each $(k,\ell,m,n)$, we see that the set $M^\alpha_{k,\ell,m,n}$ is a countable union of cylinders of finite order, which are intervals and of the form $[a,b)$ or $(c,d]$ with rational numbers $a,b,c,d$ in $[0,1)$.
Hence $M^\alpha_{k,\ell,m,n}$ is a Borel set. It follows from (\ref{Mk}) that the set $\{x \in [0,1): \tau(x)<\alpha\}$ is a Borel set.

(ii) The result is a direct consequence of Birkhoff's ergodic theorem. Indeed, we know that the continued fraction dynamical system $([0,1), T_{G}, \mu)$ is ergodic, where $T_{G}$ is the Gauss map and $\mu$ is the Gauss measure which is equivalent to the Lebesgue measure (see \cite{lesIK02} for their definitions). For any large $t>0$, the classical Birkhoff ergodic theorem shows that for $\mu$-almost all $x \in [0,1)$,
\begin{equation}\label{ergodic}
\lim_{n \to \infty} \frac{a^{-t}_1(x)+\cdots+a^{-t}_n(x)}{n} = \int^1_0 a^{-t}_1(x) d\mu(x):=P(t).
\end{equation}
By the relation between Gauss measure and Lebesgue measure, we obtain the above result also holds for Lebesgue almost all $x \in [0,1)$ and $P(t)$ satisfies
\[
\frac{1}{2\log 2}\sum_{k \geq 1} \frac{1}{k^{t+1}(k+1)}\leq P(t) \leq \frac{1}{\log 2} \sum_{k \geq 1} \frac{1}{k^{t+1}(k+1)}.
\]
Combining this with (\ref{ergodic}) and the definition of $\tau$, we have $\tau(x) = \infty$ for Lebesgue almost all $x \in [0,1)$.

(iii) We divide $\alpha\in[0,\infty]$ into three cases. For $\alpha =0$, let $x:=[a_1 ,a_2 ,\cdots,a_n ,\cdots]$ with $a_n =\lfloor e^n\rfloor$ for all $n \geq 1$. Then $x\in [0,1)$ is an irrational number and $\tau(x) =0$.
For $\alpha \in (0,\infty)$, let $x:=[a_1 ,a_2 ,\cdots,a_n ,\cdots]$ with $a_n =\lfloor n^{1/\alpha}\rfloor$ for all $n \geq 1$. Thus we have $x\in [0,1)$ is an irrational number and $\tau(x) =\alpha$.
For $\alpha =\infty$, let $x$ be the golden ratio. Of course it is an irrational and its partial quotients $a_n(x) =1$ for all $n \geq 1$ and hence $\tau(x) =\infty$.

(iv)\ By (iii), there exists an irrational number $x_0 \in [0,1)$ such that $\tau(x_0) =\alpha$.
Let $(\varepsilon_1,\varepsilon_2,\cdots,\varepsilon_n,\cdots) \in \{0,1\}^{\mathbb{N}}$. We construct a new real number $\widetilde{x}_0$ as $[\widetilde{a}_1,\widetilde{a}_2,\cdots,\widetilde{a}_n,\cdots]$ with $\widetilde{a}_k = a_k(x_0)+\varepsilon_k$ for all $k \geq 1$. Then we have $\widetilde{x}_0 \in [0,1)$ and $\tau(\widetilde{x}_0) =\alpha$. This implies that there are uncountably infinite many $x \in [0,1)$ such that $\tau(x)=\alpha.$
In the following, we will see that the level set $\{x \in [0,1): \tau(x) =\alpha\}$ is dense in $[0,1)$.
To do this, write
\[
M(x_0) = \bigcup_{N \geq 1}\big\{x\in [0,1): a_n(x)= a_n(x_0), \forall\  n\geq N\big\}.
\]
For any $y \in M(x_0)$ and $\sigma >0$, we have the series $\sum_{n \geq 1}a^{-\sigma}_n(y)$ converges if and only if the series $\sum_{n \geq 1}a^{-\sigma}_n(x_0)$ converges.
This yields that $\tau(y) =\alpha$, i.e.,
\[M(x_0)\subseteq\{x \in [0,1): \tau(x) =\alpha\}.\]
Next we will prove that $M(x_0)$ is dense in $[0,1)$. In fact, for any $y \in [0,1)$,
if $y$ is rational, let $y = [a_1(y),a_2(y),\cdots,a_k(y)]$ for some $k \geq 1$ and
\[
x_n = [a_1(y),a_2(y),\cdots,a_k(y),n,a_{k+2}(x_0), a_{k+3}(x_0),\cdots],\ \ \forall\  n \geq 1;
\]
if $y$ is irrational, let $y = [a_1(y),a_2(y),\cdots,a_n(y),\cdots]$ and
\[
x_n = [a_1(y),a_2(y),\cdots,a_n(y),a_{n+1}(x_0), a_{n+2}(x_0),\cdots],\ \ \forall\  n \geq 1.
\]
In both cases, we obtain $x_n \in M(x_0)$ for all $n \geq 1$ and $x_n \to y$ as $n \to \infty$. Hence the level set $\{x \in [0,1): \tau(x) =\alpha\}$ is dense in $[0,1)$.

(v) For any interval $I\subseteq [0,1)$, it is trivial that $\tau(I) \subseteq [0,\infty]$. For any $\alpha \in [0,\infty]$, by (iv), we know that the set $\{x \in [0,1): \tau(x) =\alpha\}$ is dense in $[0,1)$.
Thus we obtain the intersection of $I$ and $\{x \in [0,1): \tau(x) =\alpha\}$ is not empty. Hence we can find an $x_0 \in I$ such that $\tau(x_0)=\alpha$, that is, $[0,\infty] \subseteq \tau(I)$.

(vi) Let $y \in [0,1)$ be fixed and let $\alpha_0:= \tau(y)$. For an $\alpha \neq \alpha_0$, by (iv), the level set $\{x \in [0,1): \tau(x) =\alpha\}$ is dense in $[0,1)$.
For the given $y$, there exists $\{x_n\}_{n \geq 1}$ such that $\tau(x_n) = \alpha$ for all $n \geq 1$ and $x_n \to y$ as $n \to \infty$. Note that $\tau(y) = \alpha_0 \neq \alpha = \tau(x_n)$, we see that the function $\tau$ is discontinuous at $y$. Therefore, the function $\tau: [0,1) \to [0,\infty]$ is everywhere discontinuous.

\subsection{The proof of Theorem \ref{No}}
We divide the proof into two cases: $\alpha =\infty$ and $\alpha \in [0,\infty)$.

In the case when $\alpha =\infty$, by Theorem \ref{Fu} (ii), we have $\tau(x) = \infty$ for Lebesgue almost all $x \in [0,1)$. It is clear to see that the set $\{x \in [0,1): \tau(x) =\infty\}$ has full Hausdorff dimension.

In the following, we assume that $0 \leq \alpha <\infty$. If $\tau(x_0) =\alpha$ for some $x_0 \in [0,1)$, by the definition of $\tau$, we have there exists $\varepsilon_0>0$ such that
 \[\sum_{n \geq 1} a^{-(\alpha+\varepsilon_0)}_n(x_0) <\infty, \] which implies $a_n(x_0) \to \infty$ as $n \to \infty$.
It follows from Good \cite[Theorem 1]{lesGood41} that the set of real numbers $x \in [0,1)$ such that $a_n(x) \to \infty$ as $n \to \infty$ has Hausdorff dimension $1/2$.
Thus we obtain
\[
\dim_{\rm H} \{x \in [0,1): \tau(x) =\alpha \} \leq1/2.
\]
The lower bound of the set $\{x \in [0,1): \tau(x) =\alpha\}$ follows from Lemma 3.2 of \cite{FLWW09} (see Lemmas \ref{flww}). More precisely, we have

\begin{enumerate}
\item for $\alpha =0$, $\{x \in [0,1): e^n \leq a_n(x) < 2e^n,\forall\  n \geq 1\}\subseteq\{x \in [0,1): \tau(x) =\alpha\}$
and the former is of Hausdorff dimension $1/2$;
\item  for $\alpha \in (0,\infty)$,
$
\{x \in [0,1): n^{1/\alpha}\leq a_n(x) <2n^{1/\alpha}, \forall\  n \geq 1\}\subseteq\{x \in [0,1): \tau(x) =\alpha\}
$
and also the former is of Hausdorff dimension $1/2$.
\end{enumerate}
Therefore, for any $\alpha\in[0,\infty)$, $\dim_{\rm H} \{x \in [0,1): \tau(x) =\alpha\}\geq1/2$.


\subsection{The proof of Theorem \ref{cfx}}
Firstly, we give a useful combinatorial lemma concerning about the cardinality of sets of numbers.
\begin{lemma}\label{card}
For any positive integer $L$ and $n \geq 1$, let
\[
A=\big\{(\sigma_1,\sigma_2,\cdots,\sigma_n)\in\mathbb{N}^{n}: 1\leq\sigma_1\leq\sigma_2\leq\cdots \leq\sigma_n\leq L\big\}
\]
and $N_{n}(L)=\# A$. Then
 \[N_{n}(L)=\binom{n}{L+n-1}=\frac{(n+L-1)!}{n!(L-1)!}.\]
\end{lemma}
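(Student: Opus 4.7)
The plan is to establish a bijection between $A$ and the collection of $n$-element subsets of $\{1, 2, \ldots, L+n-1\}$, after which the formula reduces to the standard binomial count of such subsets.

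First I would define the shifting map $\Phi \colon A \to B$, where
\[
B = \big\{(\tau_1, \ldots, \tau_n) \in \mathbb{N}^n : 1 \leq \tau_1 < \tau_2 < \cdots < \tau_n \leq L+n-1\big\},
\]
by the rule $\Phi(\sigma_1, \sigma_2, \ldots, \sigma_n) = (\sigma_1,\, \sigma_2+1,\, \sigma_3+2,\, \ldots,\, \sigma_n+n-1)$. The key observation is that the weak inequality $\sigma_i \leq \sigma_{i+1}$ is equivalent to the strict inequality $\sigma_i + (i-1) < \sigma_{i+1} + i$, so $\Phi$ converts weakly increasing sequences into strictly increasing ones; the boundary conditions $1 \leq \sigma_1$ and $\sigma_n \leq L$ translate into $1 \leq \tau_1$ and $\tau_n \leq L+n-1$, confirming that $\Phi$ lands in $B$.

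Next I would verify that $\Phi$ is a bijection by exhibiting its inverse $\Psi \colon B \to A$ defined by $\Psi(\tau_1, \ldots, \tau_n) = (\tau_1,\, \tau_2-1,\, \ldots,\, \tau_n-(n-1))$. A direct check shows that $\Psi$ lands in $A$ (strict inequalities become weak ones and the range $[1,L+n-1]$ reverts to $[1,L]$), and that $\Psi \circ \Phi$ and $\Phi \circ \Psi$ are the identities on $A$ and $B$ respectively, whence $\#A = \#B$.

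Finally, since $B$ is in obvious bijection with the set of $n$-element subsets of $\{1, 2, \ldots, L+n-1\}$, I would conclude
\[
N_n(L) = \#A = \#B = \binom{L+n-1}{n} = \frac{(L+n-1)!}{n!(L-1)!},
\]
which is the desired identity. There is no substantial obstacle here; the argument is a textbook application of the \emph{stars and bars} identity for counting combinations with repetition.
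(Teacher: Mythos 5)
Your proof is correct and is essentially identical to the paper's: both use the shift map $(\sigma_1,\ldots,\sigma_n)\mapsto(\sigma_1,\sigma_2+1,\ldots,\sigma_n+n-1)$ to biject $A$ with the strictly increasing sequences in $\{1,\ldots,L+n-1\}$ and then count $n$-element subsets, i.e.\ the stars-and-bars argument. The only cosmetic difference is that you also write out the inverse map explicitly and you write the binomial coefficient in the standard orientation $\binom{L+n-1}{n}$, which is what the paper's (unconventionally typeset) symbol and factorial formula mean anyway.
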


\begin{proof}
Write
\[A^{'}=\{(r_1, r_2,\cdots, r_n)\in\mathbb{N}^{n}: 1\leq r_1<r_2<\cdots<r_n\leq L+n-1\}.\]
We consider a map $f:\ A\rightarrow A^{'}$ defined as follows: for any $(\sigma_1,\sigma_2,\cdots,\sigma_n)\in A$,
\[f(\sigma_1,\sigma_2,\cdots,\sigma_n)=(r_1, r_2,\cdots, r_n)=(\sigma_1,\sigma_2+1,\cdots,\sigma_n+n-1).\]
It is easy to check that $f$ is bijection. Thus the sets $A$ and $A^{'}$ have the same cardinality. The number of elements in $A^{'}$ is equal to a combinatorial number, i.e, the number of $n$ numbers can be chosen from among $L+n-1$ numbers. That is,
\[N_{n}(L)=\sharp A^{'}=\binom{n}{L+n-1}=\frac{(n+L-1)!}{n!(L-1)!}.\]
\end{proof}
Recall that
\[
E(\alpha)=\left\{x\in \Lambda:
 \liminf\limits_{n\to\infty}\frac{\log a_n(x)}{\log n}=\alpha\right\}.
 \]

\subsubsection{The case $\alpha\geq1$}
\textbf{Upper bound:} For any $0<\varepsilon<\alpha$, we have
\[E(\alpha)\subseteq \bigcup_{N=1}B_{N}(\varepsilon),\]
where
\begin{equation}\label{bhgx}
 B_{N}(\varepsilon)=\bigcap_{n=N}\bigcup_{k=n}\big\{x\in\Lambda: a_k(x)\leq k^{\alpha+\varepsilon}, a_{j}(x)\geq j^{\alpha-\varepsilon},\ \forall\ N\leq j\leq k\big\}.
 \end{equation}
 Then we obtain
 \begin{equation}\label{wsgx1}
 \dim_{\rm H}E(\alpha)\leq\sup_{N\geq1}\{\dim_{\rm H}B_{N}(\varepsilon)\}.
 \end{equation}
 For similarity, we only deal with the Hausdorff dimension of $B_{1}(\varepsilon)$ since the proof for other case is similar.
In view of (\ref{bhgx}), for any $n \geq 1$, we have
\begin{eqnarray}\label{11}
\nonumber B_{1}(\varepsilon)&\subseteq&\bigcup_{k=n}\big\{x\in\Lambda: a_k(x)\leq k^{\alpha+\varepsilon}, a_{j}(x)\geq j^{\alpha-\varepsilon},\ \forall\  1\leq j\leq k\big\}\\
&\subseteq&\bigcup_{k=n}\bigcup_{(\sigma_1, \cdots, \sigma_k) \in \mathcal{C}_k}I(\sigma_1, \cdots, \sigma_k)
\end{eqnarray}

where
 \[
 \mathcal{C}_k=\left\{(\sigma_1, \cdots, \sigma_k)\in\mathbb{N}^{k}:1\leq \sigma_1\leq\cdots \leq \sigma_k \leq k^{\alpha+\varepsilon}, \sigma_j\geq j^{\alpha-\varepsilon}, \forall \ 1\leq j\leq k\right\}.
 \]
By \eqref{cdgs}, we get
 \begin{equation}\label{zzss}
 |I(\sigma_1,\cdots, \sigma_k)| \leq\frac{1}{q^{2}_k} \leq\prod\limits_{i=1}^{k}a^{-2}_{i} \leq (k!)^{-2(\alpha-\varepsilon)}.
 \end{equation}
It follows from Lemma \ref{card} that
 \begin{align}\label{deltags}
\#\mathcal{C}_k&\leq N_{k}\left(\lfloor k^{\alpha+\varepsilon}\rfloor\right)\nonumber\\
&= \frac{\lfloor k^{\alpha+\varepsilon}\rfloor\cdot \left(\lfloor k^{\alpha+\varepsilon}\rfloor+1\right)\cdots\left(\lfloor k^{\alpha+\varepsilon}\rfloor+k-1\right)}{k!} \nonumber\\
&= \frac{k^{k(\alpha+\varepsilon)}}{k!}\cdot\left(1+\frac{1}{k^{\alpha+\varepsilon}}\right)\cdots\left(1+\frac{k-1}{k^{\alpha+\varepsilon}}\right) \leq \frac{2^k\cdot k^{k(\alpha+\varepsilon)}}{k!}.
 \end{align}
From the Stirling formula, for all $n \geq 1$,
\begin{equation}\label{st}
\sqrt{2\pi}n^{n+\frac{1}{2}}e^{-n}\leq n!\leq en^{n+\frac{1}{2}}e^{-n}.
\end{equation}
 Combining \eqref{st} with \eqref{deltags}, we deduce that
 \begin{equation}\label{jqgs}
 \#\mathcal{C}_k \leq 2^{k}\cdot (k!)^{\alpha+\varepsilon-1}\cdot\left(\frac{e^k}{\sqrt{2\pi k}}\right)^{\alpha+\varepsilon}.
 \end{equation}
Taking $s= \frac{\alpha+2\varepsilon-1}{2(\alpha-\varepsilon)}$, by the definition of $s$-dimensional Hausdorff measure, we conclude from \eqref{11}, \eqref{zzss} and \eqref{jqgs} that
\begin{align*}\label{qyo}
 \nonumber \mathcal{H}^{s}(B_1(\varepsilon))
 &\leq \liminf_{n\to\infty}\sum\limits_{k=n}^{\infty}\sum\limits_{(\sigma_1, \cdots, \sigma_k) \in \mathcal{C}_k}|I(\sigma_1, \cdots,\sigma_k)|^{s} \\
 &\leq \liminf_{n\to\infty}\sum\limits_{k=n}^{\infty}\frac{\#\mathcal{C}_k }{(k!)^{2s(\alpha-\varepsilon)}} \leq \liminf_{n\to\infty}\sum\limits_{k=n}^{\infty}\frac{2^ke^{k(\alpha+\varepsilon)}}
 {(k!)^{\varepsilon}}=0,
 \end{align*}
Thus
\[\dim_{\rm H} B_{1}(\varepsilon) \leq \frac{\alpha+2\varepsilon-1}{2(\alpha-\varepsilon)}.\]
Letting $\varepsilon \to 0^+$ and then by (\ref{wsgx1}),  we have
\[\dim_{\rm H}E(\alpha)\leq\dim_{\rm H} B_{1}(\varepsilon)\leq\frac{\alpha-1}{2\alpha}.\]
\textbf{Lower bound:} It is obvious that $\dim_{\rm H} E(\alpha) =0$ for $\alpha=1$ according to the proof of its upper bound, so the following we assume that $\alpha>1$. Let $M$ be the positive integer such that  $s_n=M\lfloor n^{\alpha-1}\rfloor\geq3$ for all $n \geq 1$. Write
\[
\mathbb{F}(\alpha)=\big\{x\in [0,1): ns_n\leq a_n(x)<(n+1)s_n, \forall\ n\geq1\big\}.
\]
By Lemma \ref{geshu}, we see that $\dim_{\rm H} \mathbb{F}(\alpha) = 1/(2+\xi)$, where
\[
\xi=\limsup\limits_{n\to\infty}\frac{2\log(n+1)!+\log(M\lfloor (n+1)^{\alpha-1}\rfloor)}{\sum\limits_{i=1}^{n}\log(M\lfloor n^{\alpha-1}\rfloor)} = \frac{2}{\alpha -1}.
\]
Next we claim that
\[\mathbb{F}(\alpha)\subseteq E(\alpha).\] Indeed, on one hand, for any $x \in \mathbb{F}(\alpha)$, it is easy to see that for all $n \geq 1$
\[a_n(x)<(n+1) s_n \leq(n+1)s_{n+1} \leq a_{n+1}(x)\]
 and $a_n(x) \to \infty $ as $n \to \infty$, thus we get $x \in \Lambda$. On the other hand, we also obtain
\begin{align*}
 \nonumber\alpha=\liminf\limits_{n\to\infty}\frac{\log\left(nM \lfloor n^{\alpha-1}\rfloor\right)}{\log n}&\leq \liminf\limits_{n\to\infty}
 \frac{\log a_n(x)}{\log n}\\
 &\leq \liminf\limits_{n\to\infty}\frac{\log\left((n+1)M \lfloor (n+1)^{\alpha-1}\rfloor\right)}{\log n}=\alpha.
\end{align*}
Therefore, we have \[\dim_{\rm H} E(\alpha) \geq \dim_{\rm H} F(\alpha)=(\alpha-1)/(2\alpha).\]

\subsubsection {The case $0\leq\alpha<1$} In this case, we will prove $\dim_{\rm H}E(\alpha) =0$.
 By the definition of $\liminf$, for any $0<\varepsilon<1-\alpha$, we have
 \begin{equation}\label{2bhgx}
E(\alpha)\subseteq\bigcap_{n=1}\bigcup_{k=n}\left\{x\in \Lambda:\ a_k(x)\leq k^{\alpha+\varepsilon}\right\}.
 \end{equation}
 Notice that
 \[\left\{x\in \Lambda: a_k(x)\leq k^{\alpha+\varepsilon}\right\} \subseteq \bigcup_{(\sigma_1, \cdots, \sigma_k)\in \widetilde{\mathcal{C}}_k}I(\sigma_1, \cdots, \sigma_k),\]
 where
 \[
 \widetilde{\mathcal{C}}_k=\left\{(\sigma_1, \cdots, \sigma_k)\in\mathbb{N}^{k}: 1\leq \sigma_1\leq \cdots\leq \sigma_k\leq k^{\alpha+\varepsilon}\right\}.
 \]
 By \eqref{fn} and \eqref{cdgs}, we obtain
 \begin{equation}\label{2cd}
 |I(\sigma_1, \cdots, \sigma_k)| \leq\frac{1}{q^{2}_k}
 \leq\frac{1}{20}\left(\frac{1+\sqrt{5}}{2}\right)^{-2k}.
 \end{equation}
Since $0<\varepsilon<1-\alpha$, it follows from Lemma \ref{card} that
 \begin{align}\label{delta1gs}
\# \widetilde{\mathcal{C}}_k =  N_{k}\left(\lfloor k^{\alpha+\varepsilon}\rfloor\right)
 &\leq (k+1)\cdot(k+2)\cdots(k+\lfloor k^{\alpha+\varepsilon}\rfloor-1) \notag \\
 & \leq \left(k+ k^{\alpha+\varepsilon} \right)^{k^{\alpha+\varepsilon}} \leq e^{(1+\log k)k^{\alpha+\varepsilon}}.
 \end{align}
Taking $s=\varepsilon$, by the definition of $s$-dimensional Hausdorff measure, combining \eqref{2bhgx}, \eqref{2cd} with \eqref{delta1gs}, we deduce that
\begin{align*}
 \mathcal{H}^{s}(E(\alpha))
 &\leq \liminf_{n\to\infty}\sum\limits_{k=n}^{\infty}\sum\limits_{(\sigma_1, \cdots, \sigma_k)\in \widetilde{\mathcal{C}}_k}|I(\sigma_1,\sigma_2,\cdots,\sigma_k)|^{s}\\
 &\leq \liminf_{n\to\infty}\sum\limits_{k=n}^{\infty} \# \widetilde{\mathcal{C}}_k \cdot20^{-\varepsilon}\left(\frac{1+\sqrt{5}}{2}\right)^{-2k\varepsilon}\\
 &\leq  \liminf_{n\to\infty} \sum\limits_{k=n}^{\infty}e^{k^{\alpha+\varepsilon}(1+\log k)}\cdot20^{-\varepsilon}\left(\frac{1+\sqrt{5}}{2}\right)^{-2k\varepsilon}=0,
 \end{align*}
which implies $\dim_{\rm H} E(\alpha)\leq \varepsilon$. Letting $\varepsilon \to 0^+$, the result follows.

\subsection{The proof of Theorem \ref{ybphi}}
The upper bound of $\dim_{\rm H}E_\phi$ is a consequence of Lemma \ref{wlw}. Indeed, for any $0<\varepsilon<1$, we have
\begin{equation*}
E_\phi \subseteq \bigcup_{N=1}^{\infty}\left\{x\in[0,1): a_n(x)\geq e^{(1-\varepsilon)\phi(n)},\ \forall\  n\geq N\right\}.
\end{equation*}
It follows from Lemma \ref{wlw} that for all $N \geq 1$,
\[
\dim_{\rm H}\left\{x\in[0,1): a_n(x)\geq e^{(1-\varepsilon)\phi(n)},\ \forall\  n\geq N\right\} = \frac{1}{B+1},
\]
where $B$ is given by $\log B:=\limsup_{n\to\infty} (\log\phi(n))/n$. By the countable stability of Hausdorff dimension, we obtain
\[
\dim_{\rm H}E_\phi \leq \frac{1}{B+1}.
\]

The proof for the lower bound of $\dim_{\rm H}E_\phi$ is inspired by Liao and Rams \cite{LR16}. It is trivial for $B=\infty$. In the following, we always assume that $1 \leq B <\infty$. Since \[\limsup\limits_{n\to\infty}\frac{\log\phi(n)}{n}=\log B,\]
for any $\varepsilon>0$, we have $\phi(n) \leq (B+\varepsilon/2)^n$ for $n$ large enough. This implies
\[
 \phi(n)(B+\varepsilon)^{j-n} \leq (B+\varepsilon/2)^n(B+\varepsilon)^{j-n} \to 0 \ \ (n \to \infty).
\]
 Let
\begin{equation*}\label{tidy}
 T_j=\sup\limits_{n\geq j}\left\{e^{\phi(n)(B+\varepsilon)^{j-n}}\right\},\ j=1,2,\cdots.
\end{equation*}

Then the supremum in the definition of $T_j$ is achieved.
Since $\phi$ is a non-decreasing function, it is easy to check that
 \begin{equation}\label{tigx}
 T_{j}\leq T_{j+1}\ \ \ \text{and}\ \ \ \ T_{j+1}\leq T^{B+\varepsilon}_{j}.
 \end{equation}
We proceed to show that
\begin{equation}\label{tixjx}
 \liminf\limits_{n\to\infty}\frac{\log T_n}{\phi(n)}=1.
 \end{equation}
In fact, by the definition of $T_j$, we get $T_j \geq e^{\phi(j)}$ for all $j \geq 1$ and hence
\[
\liminf\limits_{n\to\infty}\frac{\log T_n}{\phi(n)}\geq1.
\]
We denote by $t_j \geq j$ the smallest number for which $T_j = e^{\phi(t_j)(B+\varepsilon)^{j-t_j}}$. Observe that for many consecutive j's, the number $t_j$ will be the same. More precisely, $t_j=t_{j+1}=\cdots=t_{t_j}$.
Let $\{\ell_k\}_{k\geq1}$ be the sequence of all $t_j's$ in the increasing order, without repetitions. For these $\ell_k$, we obtain $T_{\ell_k} = e^{\phi(\ell_k)}$, which gives
\[
\liminf\limits_{n\to\infty}\frac{\log T_n}{\phi(n)}\leq  \liminf\limits_{n\to\infty}\frac{\log T_{\ell_k} }{\phi(\ell_k)}  =1.
\]
Next we use $\{T_j\}_{j\geq1}$ to construct a suitable subset of ${E}_\phi$. Let $M$ be the positive integer such that $s_n=M\lfloor T_n\rfloor\geq3$ for all $n \geq 1$.
Write
\[
\mathbb{F}(T)=\big\{x\in [0,1): ns_n\leq a_n(x)<(n+1)s_n, \forall\ n\geq1\big\}.
\]
It is easy to verify that
\[\mathbb{F}(T) \subseteq E_\phi.\]
Since $\phi(n)/\log n \to\infty$ as $n \to \infty$, combining \eqref{tigx} with \eqref{tixjx}, we have
\[\limsup\limits_{n\to\infty}\frac{\log(n+1)}{\log T_n} =0\ \ \ \ \text{and}\ \ \ \log T_{n+1}-\log T_1\leq\big(B+\varepsilon-1\big)\sum\limits_{k=1}^{n}\log T_k.
\]
It follows from Lemma \ref{geshu} that
\[\dim_{\rm H}E_\phi \geq \dim_{\rm H}\mathbb{F}(T) =
\frac{1}{2+\xi},\] where $\xi$ is given by
\begin{align*}
\xi&= \limsup\limits_{n\to\infty}\frac{2\log(n+1)!+\log(M\lfloor T_{n+1}\rfloor)}{\sum\limits_{k=1}^{n}\log(M\lfloor T_k\rfloor)}\\
&\leq \limsup\limits_{n\to\infty}\frac{2\log(n+1)!}{\sum\limits_{k=1}^{n} \log T_k}+\limsup_{n\to\infty}\frac{\log T_{n+1}}{\sum\limits_{k=1}^{n}\log  T_k}\\
&\leq  \limsup\limits_{n\to\infty}\frac{2\log(n+1)}{\log T_n} +(B+\varepsilon-1)=B+\varepsilon-1.
\end{align*}
Hence \[\dim_{\rm H}E_\phi \geq \frac{1}{B+1+\varepsilon}.\] Letting $\varepsilon \to 0^+$ yields the assertion.

\begin{remark}
Let $\phi$ be a non-decreasing function and $\phi(n)/\log n \to\infty$ as $n \to \infty$.
From the proof of Theorem \ref{ybphi}, we conclude that
\[
\dim_{\rm H}\left\{x\in [0,1): \liminf\limits_{n\to\infty}\frac{\log a_n(x)}{\phi(n)}=1\right\}=\frac{1}{B+1}.
\]
\end{remark}

{\bf Acknowledgement:}
The authors would like to thank Professor Lingmin Liao for his invaluable comments. This research was supported by National Natural Science Foundation of China (11771153, 11801591) and Fundamental
Research Funds for the Central Universities SYSU-18lgpy65.

\end{document}